\title{Self-Reachable Configuration Polytopes for Trees}
\author{Benjamin Lyons, McCabe Olsen}
\address{Department of Mathematics\\
  Rose-Hulman Institute of Technology\\
  Terre Haute, IN 47803--3920}
\date{\today}
\thanks{The first author was a participant at the Rose-Hulman REU funded by NSF DMS \#1852132 and advised by the second author.
The second author was partially supported by NSF DMS \# 2418532.}
\subjclass[2020]{Primary: 52B20, 05C57, 05C05}
\pgfplotsset{compat=newest}
\numberwithin{equation}{section}
\numberwithin{figure}{section}
\newtheorem{theorem}{Theorem}[section]
\newtheorem{lemma}[theorem]{Lemma}
\newtheorem{corollary}[theorem]{Corollary}
\newtheorem{proposition}[theorem]{Proposition}
\theoremstyle{definition}
\newtheorem{definition}[theorem]{Definition}
\newtheorem{example}[theorem]{Example}
\theoremstyle{remark}
\newtheorem{remark}[theorem]{Remark}
\newcommand{\conv}{\text{conv}}
\def\XXint#1#2#3{{\setbox0=\hbox{$#1{#2#3}{\int}$ }
\vcenter{\hbox{$#2#3$ }}\kern-.6\wd0}}
\newcommand{\Def}[1]{\textbf{#1}}
\newcommand{\Z}{\mathbb{Z}}
\newcommand{\R}{\mathbb{R}}
\newcommand{\CP}[2]{\mathcal{CP}_{#1}^{(#2)}}
\begin{document}
\begin{abstract}
We study lattice polytopes which arise as the convex hull of chip vectors for \emph{self-reachable} chip configurations on a tree $T$.
We show that these polytopes always have the integer decomposition property and characterize the vertex sets of these polytopes. 
Additionally, in the case of self-reachable configurations with the smallest possible number of chips, we show that these polytopes are unimodularly equivalent to a unit cube.  
\end{abstract}

\maketitle

\section{Introduction}
Let $G$ be a graph.
The chip firing game on $G$ is a rather simple way to explain a discrete dynamical system on the graph: To each vertex, associate a nonnegative integer (often viewed as a commodity such as poker chips). Then choosing a vertex which has at least as many chips as its degree, we ``fire" the vertex. That is, we transfer one chip to each of its neighbors.
We then continue this process as long as we want or are able to. 
The notion of chip firing developed both as a combinatorial game (see, e.g. \cite{ALS89,BLS91,Spencer}) and via the abelian sandpile model, which was first studied by Bak, Tang, and Wisenfeld (see \cite{BakTangWisenfeld}).
The mathematics of chip firing is quite rich encompassing combinatorics, statistical physics, and algebraic geometry via Riemann-Roch theory (see \cite{Klivans} for a broad overview of chip firing).

In this paper, we study a family of lattice polytopes which arise as the convex hull certain chip configurations, called \emph{self-reachable configurations}, which was first defined and explored in \cite{First Paper}.
For a tree $T$, we denote the convex hull of self-reachable configurations with $\ell$ chips by $\CP{\ell}{T}$.
In Section~\ref{sec:background}, we discuss all necessary background for graph theory, chip firing, self-reachable configurations, and convex polytopes.
Section~\ref{sec:cubes} explores the polytopes $\CP{n-1}{T}$ which arise from the self-reachable configurations with the fewest number of chips possible.
We show that these polytopes are always unimodularly equivalent to unit cubes. 
In Section~\ref{sec:IDP}, we show that for any tree $\CP{\ell}{T}$ has the integer decomposition property. Finally, Section~\ref{sec:vertices} characterizes the vertices of the lattice polytopes $\CP{\ell}{T}$.

\section{Background}\label{sec:background}
We begin by setting notation which will be used throughout.

\begin{itemize}
	\item For $a, b \in \mathbb{Z}$, we define the sequences $\mathbb{N}_{a} = \{a, a + 1, a + 2, \ldots\}$, and $\mathbb{N}_{[a, b]} = \{a, a + 1, a + 2, \ldots, b\}$.
        \item Given two finite sequences $\Phi$ and $\Psi$, we let $\{\Phi, \Psi\}$ be the sequence obtained by appending the sequence $\Psi$ to the end of the sequence $\Phi$.
        \item For a vector $\bm{x} \in \mathbb{R}^n$, we clarify the dimension of $\bm{x}$ by writing $\bm{x}^{(n)}$.
        \item Given vectors $\bm{x} = (x_1, x_2, \ldots, x_n) \in \mathbb{R}^n$ and $\bm{y} = (y_1, y_2, \ldots, y_m) \in \mathbb{R}^m$, we let \[(\bm{x}, \bm{y}) = (x_1, x_2, \ldots, x_n, y_1, y_2, \ldots, y_m) \in \mathbb{R}^{n + m}.\]
        \item Unless otherwise specified, we will label the vertices of an $n$-vertex graph as $v_1, v_2, \ldots, v_n$. Throughout this paper, we will only work with graphs that have at least one vertex.
        \item We will let $\deg^{(G)}(v_a)$ refer to the degree of the vertex $v_a$ on the graph $G$.
        When the graph is clear from context, we will denote this as simply $\deg(v_a)$. 
        \item We will denote the vertex set of a graph $G$ by $V(G)$. 
\end{itemize}



We will now review some basic definitions from graph theory and chip firing. 
For additional background and detail beyond what is provided here, we refer the reader to the excellent book of Klivans \cite{Klivans}.
Let a simple graph $G$ on $n$ vertices.
Recall that the \Def{Laplacian matrix} of $G$, denoted $\Delta(G)$, is the $n \times n$ matrix given by

     \begin{align*}
         \Delta_{ij}(G) = 
         \begin{cases}
             -1 & i \neq j \text{ and } (v_i, v_j) \in E \\
             \deg(v_i) & i = j \\
             0 & \text{otherwise.}
         \end{cases}
     \end{align*}
A \Def{chip configuration} on $G$ is a vector $\bm{c}^{n}\in\Z_{\geq 0}^n$, where we interpret $c_i$ as the number of chips on vertex $v_i$.
If $G$ has $n$ vertices with chip configuration $\bm{c}^{(n)}$, the vector $\bm{d}^{(n)}$ that results from \Def{firing} the $i$th vertex $v_i$ is
	\begin{align*}
        \bm{d}^{(n)} = \bm{c}^{(n)} - \Delta(G)\bm{e}_i^{(n)}.
    \end{align*}
This has the effect of moving a chip from $v_i$ to each vertex neighboring $v_i$. 
We say that it is \Def{legal} to fire $v_i$ on $G$ starting from $\bm{c}^{(n)}$ if $\bm{d}^{(n)}$ is a chip configuration, which is to say the $i$th component of $\bm{d}^{(n)}$ remains nonnegative. 
Equivalently, it is legal to fire $v_i$ provided $\bm{e}_i^{(n)} \cdot \bm{c}^{(n)} \ge \deg(v_i)$.
A \Def{firing sequence} is a sequence of integers which represents a sequence of legal chip firings.
Throughout this paper, firing sequences will be assumed to be finite. 
Given a starting configuration $\bm{c}^{(n)}$ and a firing sequence $\Phi$, the resulting configuration is
	\begin{align}\label{eq2.1}
        \bm{F}_\Phi^{(G)}\left(\bm{c}^{(n)}\right) = \bm{c}^{(n)} - \Delta(G)\sum_{j \in \Phi}\bm{e}_j^{(n)}.
    \end{align}
We say that a firing sequence $\Phi = \{\phi_1, \phi_2, \ldots, \phi_m\}$ is \Def{legal} on $G$ starting from a chip configuration $\bm{c}^{(n)}$ if it consists entirely of legal vertex firings.
That is, it is legal to fire $v_{\phi_1}$ on $G$ starting from $\bm{c}^{(n)}$, it is legal to fire $v_{\phi_2}$ on $G$ starting from $\bm{F}_{\{\phi_1\}}^{(G)}\left(\bm{c}^{(n)}\right)$, it is legal to fire $v_{\phi_3}$ on $G$ starting from $\bm{F}_{\{\phi_1, \phi_2\}}^{(G)}\left(\bm{c}^{(n)}\right)$, and so on. We let $R^{(G)}\left(\bm{c}^{(n)}\right)$ denote the set of chip configurations on $G$ that can be reached via nonempty legal firing sequences on $G$ starting from $\bm{c}^{(n)}$.

\subsection{Self-Reachable Chip Configurations} 
 In this subsection, we will review the notion of a self-reachable configuration. 
For additional background and for proofs of any results stated within, please consult \cite{First Paper}.
We begin with the definition.
 
 \begin{definition}\label{selfreachableconfig}
    Let $G$ be a connected simple $n$-vertex graph. 
    The chip configuration $\bm{s}^{(n)}$ is called \Def{self-reachable} on $G$ if $\bm{s}^{(n)} \in R^{(G)}\left(\bm{s}^{(n)}\right)$. 
    Wet $S_\ell^{(G)}$ denote set of all self-reachable configurations on $G$ with exactly $\ell$ chips.
\end{definition}

The following result applies to any connected, simple graph equipped with a self-reachable configuration. 

\begin{proposition}[\cite{First Paper}]\label{Add Chips to SRC}
    Let $G$ be a connected simple $n$-vertex graph, and let $\bm{s}^{(n)}$ be a self-reachable configuration on $G$. Let $\bm{c}^{(n)}$ be a chip configuration on $G$ such that $\bm{c}^{(n)}$ has at least as many chips on each vertex of $G$ as $\bm{s}^{(n)}$. Then $\bm{c}^{(n)}$ is self-reachable on $G$.
\end{proposition}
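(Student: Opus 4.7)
The plan is to exploit the linearity of the firing map in equation~(\ref{eq2.1}) together with the hypothesis that $\bm{c}^{(n)} - \bm{s}^{(n)} \in \Z_{\geq 0}^n$. Since $\bm{s}^{(n)}$ is self-reachable, by Definition~\ref{selfreachableconfig} there exists a nonempty legal firing sequence $\Phi = \{\phi_1, \phi_2, \ldots, \phi_m\}$ on $G$ starting from $\bm{s}^{(n)}$ such that $\bm{F}_\Phi^{(G)}\left(\bm{s}^{(n)}\right) = \bm{s}^{(n)}$. My claim is that the very same sequence $\Phi$ is legal on $G$ starting from $\bm{c}^{(n)}$, and returns $\bm{c}^{(n)}$ to itself.

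For the return-to-start, I would simply apply formula~(\ref{eq2.1}) to both starting configurations and subtract:
\[
\bm{F}_\Phi^{(G)}\left(\bm{c}^{(n)}\right) - \bm{F}_\Phi^{(G)}\left(\bm{s}^{(n)}\right) = \left(\bm{c}^{(n)} - \Delta(G)\sum_{j \in \Phi}\bm{e}_j^{(n)}\right) - \left(\bm{s}^{(n)} - \Delta(G)\sum_{j \in \Phi}\bm{e}_j^{(n)}\right) = \bm{c}^{(n)} - \bm{s}^{(n)}.
\]
Since the right-hand side collapses to $\bm{c}^{(n)} - \bm{s}^{(n)}$ and $\bm{F}_\Phi^{(G)}(\bm{s}^{(n)}) = \bm{s}^{(n)}$, we conclude $\bm{F}_\Phi^{(G)}(\bm{c}^{(n)}) = \bm{c}^{(n)}$, which is precisely the self-reachability equation provided legality holds along the way.

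To verify legality I would induct on the prefixes $\Phi_k = \{\phi_1, \ldots, \phi_k\}$. The exact same subtraction argument as above shows that for every $k$,
\[
\bm{F}_{\Phi_k}^{(G)}\left(\bm{c}^{(n)}\right) = \bm{F}_{\Phi_k}^{(G)}\left(\bm{s}^{(n)}\right) + \left(\bm{c}^{(n)} - \bm{s}^{(n)}\right),
\]
so the intermediate $\bm{c}$-trajectory dominates the $\bm{s}$-trajectory componentwise. At step $k+1$, the legality condition on the $\bm{s}$-trajectory reads $\bm{e}_{\phi_{k+1}}^{(n)} \cdot \bm{F}_{\Phi_k}^{(G)}(\bm{s}^{(n)}) \geq \deg(v_{\phi_{k+1}})$, and the dominance inequality immediately upgrades this to the same condition on the $\bm{c}$-trajectory.

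I do not anticipate a genuine obstacle here; the only subtlety is to notice that the legality condition depends only on the count of chips at the vertex being fired, so any configuration pointwise above a legal one remains legal, and that the firing map is linear so the trajectories differ by a constant vector for all time. The entire argument is essentially a monotonicity observation applied to the self-reachable loop for $\bm{s}^{(n)}$.
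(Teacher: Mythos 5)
Your argument is correct and complete: the identity $\bm{F}_\Phi^{(G)}(\bm{c}^{(n)}) - \bm{F}_\Phi^{(G)}(\bm{s}^{(n)}) = \bm{c}^{(n)} - \bm{s}^{(n)}$ holds for every prefix of $\Phi$ by \eqref{eq2.1}, the legality test $\bm{e}_i^{(n)}\cdot(\cdot) \ge \deg(v_i)$ depends only on the chip count at the fired vertex and is preserved under componentwise domination, and the full sequence returns $\bm{c}^{(n)}$ to itself. The paper states this proposition without proof, citing \cite{First Paper}, and your monotonicity argument is the standard one for such results, so there is nothing to flag.
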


The following collection of results specifically pertain to self-reachable configurations on trees. 
These results in particular will be useful in our eventual discussion of polytopes.  

\begin{theorem}[\cite{First Paper}]\label{srcsubtreeresult}
    Let $T$ be an $n$-vertex tree. Then the configuration $\bm{c}^{(n)}$ is self-reachable on $T$ if and only if $\bm{c}^{(n)}$ has at least $m - 1$ chips on every $m$-vertex subtree of $T$.
\end{theorem}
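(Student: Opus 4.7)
The plan is to characterize self-reachability through a simpler combinatorial structure --- an orientation of the edges of $T$ --- and then relate such orientations to the subtree chip-count inequality via a Hall-type argument.

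First, I would establish the intermediate equivalence: $\bm{c}^{(n)}$ is self-reachable on $T$ if and only if there exists an orientation $\mathcal{O}$ of $T$ with $c_v \geq \text{out-deg}_\mathcal{O}(v)$ for every vertex $v$. The easy direction uses such an orientation to produce a legal firing sequence. Since trees are acyclic, every orientation admits a topological order; firing vertices in this order gives a sequence of length $n$ in which, at the moment $v$ fires, the already-fired in-neighbors of $v$ have each contributed a chip, so $v$ holds at least $c_v + \text{in-deg}_\mathcal{O}(v) \geq \deg_T(v)$ chips. The firing vector is $\bm{1}^{(n)} \in \ker \Delta(T)$, so by \eqref{eq2.1} the sequence returns to $\bm{c}^{(n)}$, yielding self-reachability.

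The harder direction (the main obstacle) is extracting such an orientation from a self-reachable configuration. Given a nonempty legal firing sequence $\Phi$ with $\bm{F}_\Phi^{(T)}\left(\bm{c}^{(n)}\right) = \bm{c}^{(n)}$, equation \eqref{eq2.1} combined with $\ker \Delta(T) = \mathbb{R} \cdot \bm{1}^{(n)}$ (since $T$ is connected) forces every vertex to be fired the same number $k \geq 1$ of times. For each $v$, let $\tau_v$ denote the position in $\Phi$ of $v$'s final firing; orient each edge $\{u, w\}$ as $u \to w$ whenever $\tau_u < \tau_w$. To verify $c_v \geq \text{out-deg}_\mathcal{O}(v)$, I would examine the chip count on $v$ just before its final firing: $v$ has already fired $k - 1$ times, so legality at time $\tau_v$ yields $c_v + \sum_{u \sim v} y_u(\tau_v - 1) \geq k \deg_T(v)$, where $y_u(t)$ counts firings of $u$ through time $t$. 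In-neighbors $u$ of $v$ in $\mathcal{O}$ (with $\tau_u < \tau_v$) satisfy $y_u(\tau_v - 1) = k$, while out-neighbors satisfy $y_u(\tau_v - 1) \leq k - 1$, so $\sum_{u \sim v} y_u(\tau_v - 1) \leq k \cdot \text{in-deg}_\mathcal{O}(v) + (k-1) \cdot \text{out-deg}_\mathcal{O}(v)$. Combining these inequalities and using $\deg_T(v) = \text{in-deg}_\mathcal{O}(v) + \text{out-deg}_\mathcal{O}(v)$ gives the desired $c_v \geq \text{out-deg}_\mathcal{O}(v)$ after cancellation.

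Finally, I would convert the orientation condition into the subtree inequality via Hall's theorem (equivalently, max-flow min-cut). Requiring an orientation $\mathcal{O}$ with $c_v \geq \text{out-deg}_\mathcal{O}(v)$ for all $v$ amounts to assigning each of the $n - 1$ edges of $T$ to one of its endpoints (the tail), subject to per-vertex capacities $c_v$. Such an assignment exists if and only if $|E'| \leq \sum_{v \in V(E')} c_v$ for every $E' \subseteq E(T)$. In a tree, the connected components $(V_j, E_j)$ of $(V(E'), E')$ are themselves subtrees, hence $|E_j| = |V_j| - 1$, so the Hall condition decomposes into the per-subtree inequality $\sum_{v \in V(T')} c_v \geq |V(T')| - 1$ applied to each $m$-vertex subtree $T'$. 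Together with the orientation characterization, this yields the theorem.
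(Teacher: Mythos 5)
Your proof is correct. There is nothing in this paper to compare it against directly: Theorem~\ref{srcsubtreeresult} is imported from \cite{First Paper} without proof, and the surrounding machinery (notably Corollary~\ref{AddChipToLeafOrNeighbor}) suggests the original argument is an induction on $n$ that peels off a leaf and tracks the leaf/neighbor dichotomy. Your route is genuinely different and self-contained. The pivotal step is sound: a nonempty legal firing sequence returning to $\bm{c}^{(n)}$ has firing vector in $\ker \Delta(T) = \mathbb{R}\cdot\bm{1}^{(n)}$ (connectedness), so every vertex fires the same number $k \ge 1$ of times, the last-firing times induce an orientation, and the legality inequality at each vertex's final firing does cancel to exactly $c_v \ge \mathrm{outdeg}_{\mathcal{O}}(v)$; conversely, any orientation of a tree is acyclic, and firing along a topological order is a legal closed sequence precisely under that same degree bound. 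The passage to the subtree inequality via the deficiency form of Hall's theorem is also correct: assigning each edge a tail subject to capacities $c_v$ is feasible iff $|E'| \le \sum_{v \in V(E')} c_v$ for all $E' \subseteq E(T)$, and since the components of $(V(E'), E')$ are subtrees with $|E_j| = |V_j| - 1$, this condition is equivalent to the per-subtree bound (the one-vertex subtrees being vacuous by nonnegativity). What your approach buys is an explicit certificate of self-reachability --- an orientation whose topological order is a length-$n$ legal closed firing sequence --- at the cost of invoking Hall's theorem, which the elementary leaf-removal induction avoids.
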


\begin{corollary}[\cite{First Paper}]\label{subtreeconfigsaresrcs}
    Let $T$ be an $n$-vertex tree, and let $\bm{s}^{(n)}$ be a self-reachable configuration on $T$. Then the chip configuration formed by $\bm{s}^{(n)}$ on any subtree $T^*$ of $T$ must be self-reachable on $T^*$.
\end{corollary}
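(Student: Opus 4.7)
The plan is to apply Theorem~\ref{srcsubtreeresult} in both directions, using the fact that any subtree of $T^*$ is automatically a subtree of $T$. I would proceed as follows.

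First, I would invoke Theorem~\ref{srcsubtreeresult} applied to $T$ with the hypothesis that $\bm{s}^{(n)}$ is self-reachable on $T$ to conclude that $\bm{s}^{(n)}$ carries at least $m-1$ chips on every $m$-vertex subtree of $T$. Next, let $T^*$ be an arbitrary subtree of $T$, say on vertex set $U \subseteq V(T)$, and let $\bm{s}^*$ denote the chip configuration on $T^*$ obtained by restricting $\bm{s}^{(n)}$ to $U$. To show $\bm{s}^*$ is self-reachable on $T^*$, by Theorem~\ref{srcsubtreeresult} applied to $T^*$ it suffices to verify that $\bm{s}^*$ places at least $k-1$ chips on every $k$-vertex subtree of $T^*$.

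Let $T'$ be any $k$-vertex subtree of $T^*$. The core observation, which I would state explicitly, is that $T'$ is then also a $k$-vertex subtree of $T$, simply because subtree containment is transitive. Consequently, the first step supplies the bound that $\bm{s}^{(n)}$ has at least $k-1$ chips on $T'$; since $V(T') \subseteq U$, the restriction $\bm{s}^*$ agrees with $\bm{s}^{(n)}$ on $V(T')$ and therefore also has at least $k-1$ chips on $T'$. Since $T'$ was an arbitrary subtree of $T^*$, the self-reachability criterion of Theorem~\ref{srcsubtreeresult} is satisfied for $\bm{s}^*$ on $T^*$, completing the proof.

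There is no real obstacle here; the argument is essentially a compatibility check between the subtree-chip-count characterization of self-reachability and the transitivity of the subtree relation. The only point worth pausing on is the (trivial) observation that the restriction operation commutes with the chip-count bound along any sub-subtree, which is why the conclusion for $T^*$ falls out immediately from the conclusion for $T$.
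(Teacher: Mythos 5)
Your proof is correct and follows exactly the route the paper intends: the result is presented as a corollary of Theorem~\ref{srcsubtreeresult}, and your argument---apply the forward direction on $T$, note that every subtree of $T^*$ is a subtree of $T$ by transitivity, then apply the reverse direction on $T^*$---is the natural derivation (the paper itself defers the written proof to \cite{First Paper}). No gaps.
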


\begin{corollary}[\cite{First Paper}]\label{AddChipToLeafOrNeighbor}
    Let $T$ be an $n$-vertex tree with $n\geq 2$ where $v_n$ is a leaf connected to $v_{n - 1}$. Then the following are equivalent:

    \begin{enumerate}
        \item $\bm{s}^{(n - 1)}$ is a self-reachable configuration on $T \setminus v_{n}$.

        \item $\left(\bm{s}^{(n - 1)} + \bm{e}_{n - 1}^{(n - 1)}, 0\right)$ is a self-reachable configuration on $T$.

        \item $\left(\bm{s}^{(n - 1)}, 1\right)$ is a self-reachable configuration on $T$.
    \end{enumerate}
\end{corollary}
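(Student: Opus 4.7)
The strategy is to reduce each of the three conditions to a subtree chip-count inequality via Theorem~\ref{srcsubtreeresult}, then verify the equivalences by case analysis on whether a given subtree contains $v_n$ and/or $v_{n-1}$. Throughout, write $T' = T \setminus v_n$, and recall that any connected subgraph of $T$ containing $v_n$ must also contain $v_{n-1}$, since $v_n$ is a leaf attached to $v_{n-1}$.

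For (1)~$\Leftrightarrow$~(3): The implication (3)~$\Rightarrow$~(1) is immediate from Corollary~\ref{subtreeconfigsaresrcs}, since the restriction of $(\bm{s}^{(n-1)}, 1)$ to $T'$ is exactly $\bm{s}^{(n-1)}$. For (1)~$\Rightarrow$~(3), let $T^*$ be any $m$-vertex subtree of $T$. If $v_n \notin T^*$, then $T^*$ is a subtree of $T'$ and the chip count on $T^*$ under $(\bm{s}^{(n-1)}, 1)$ agrees with that under $\bm{s}^{(n-1)}$, hence is at least $m-1$ by (1). If $v_n \in T^*$, then $T^* \setminus v_n$ is an $(m-1)$-vertex subtree of $T'$ carrying at least $m-2$ chips under $\bm{s}^{(n-1)}$, and adding the single chip on $v_n$ yields the required $m-1$ chips. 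Theorem~\ref{srcsubtreeresult} then gives (3).

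For (1)~$\Leftrightarrow$~(2): The direction (1)~$\Rightarrow$~(2) again splits into three cases depending on whether $v_n \in T^*$ and whether $v_{n-1} \in T^*$. When $v_n \notin T^*$ and $v_{n-1} \notin T^*$ the count is unchanged; when $v_n \notin T^*$ but $v_{n-1} \in T^*$ the extra chip at $v_{n-1}$ increases the count; when $v_n \in T^*$, the same subtree-removal trick as above combined with the extra chip at $v_{n-1}$ provides the needed bound. The subtler direction is (2)~$\Rightarrow$~(1): given an $m$-vertex subtree $T^*$ of $T'$, applying (2) directly to $T^*$ only guarantees at least $m-2$ chips under $\bm{s}^{(n-1)}$ whenever $v_{n-1} \in T^*$, which falls one short of the threshold. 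The remedy is to apply (2) to the $(m+1)$-vertex subtree $T^* \cup \{v_n\}$ of $T$ instead: this subtree must carry at least $m$ chips under the augmented configuration, and since $v_n$ contributes $0$ while $v_{n-1}$ contributes one bonus chip, $T^*$ must carry at least $m-1$ chips under $\bm{s}^{(n-1)}$, as required.

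The main obstacle is spotting that in (2)~$\Rightarrow$~(1) one must enlarge the subtree to include $v_n$ before invoking the hypothesis; the naive approach of applying (2) directly to $T^*$ is off by one whenever $v_{n-1} \in T^*$. All remaining cases are routine bookkeeping with Theorem~\ref{srcsubtreeresult}.
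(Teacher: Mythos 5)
Your proof is correct; the paper itself quotes this corollary from \cite{First Paper} without reproving it, and your derivation from the subtree characterization in Theorem~\ref{srcsubtreeresult} (including the key step of enlarging a subtree of $T\setminus v_n$ containing $v_{n-1}$ by the leaf $v_n$ before invoking (2)) is exactly the natural route that makes it a corollary of that theorem. The only nitpick is that your blanket claim ``any connected subgraph containing $v_n$ also contains $v_{n-1}$'' fails for the single-vertex subtree $\{v_n\}$, but there the required bound is $m-1=0$ chips, so every case you treat remains valid.
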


We close this subsection by citing an enumerative result for self-reachable configurations on trees. 

\begin{theorem}[\cite{First Paper}]\label{srccountingtheorem}
    Let $T$ be an $n$-vertex tree and let $\displaystyle C_{\ell,n}\coloneq \left|S_\ell^{(T)}\right|$.    
    Then $C_{\ell,n}$ satisfies  recurrence relation for $\ell \in \mathbb{N}_2$ and $n \in \mathbb{N}_2$:
    \begin{align}\label{eq4.1}
        C_{\ell, n} = C_{\ell - 1, n} + 2C_{\ell - 1, n - 1} - C_{\ell - 2, n - 1}.
    \end{align}
    with $C_{\ell, 1} = 1$ for all $\ell \in \mathbb{N}_0$, $C_{0, n} = 0$ for all $n \in \mathbb{N}_2$, $C_{1, 2} = 2$, and $C_{1, n} = 0$ for all $n \in \mathbb{N}_3$. 
\end{theorem}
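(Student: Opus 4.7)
My plan is to prove the recurrence by fixing an $n$-vertex tree $T$ with $n \geq 2$, choosing a leaf $v_n$ adjacent to $v_{n-1}$, and partitioning $S_\ell^{(T)}$ according to the chip count $s_n$ on the leaf. The initial conditions $C_{\ell,1}=1$, $C_{0,n}=0$ for $n\ge 2$, $C_{1,2}=2$, and $C_{1,n}=0$ for $n \ge 3$ follow from Theorem~\ref{srcsubtreeresult}: on a single vertex any configuration is trivially self-reachable, while on an $n$-vertex tree with $n\ge 2$ the whole tree is a subtree requiring at least $n-1$ chips.

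For the recurrence, I first handle configurations with $s_n \geq 1$. Given such a configuration $(\bm{s}^{(n-1)}, s_n)$ that is self-reachable on $T$, Corollary~\ref{subtreeconfigsaresrcs} applied to the subtree $T\setminus v_n$ shows that $\bm{s}^{(n-1)}$ is self-reachable on $T\setminus v_n$. Conversely, if $\bm{s}^{(n-1)}$ is self-reachable on $T\setminus v_n$, then Corollary~\ref{AddChipToLeafOrNeighbor}(3) gives that $(\bm{s}^{(n-1)}, 1)$ is self-reachable on $T$, and then Proposition~\ref{Add Chips to SRC} lets me add $s_n - 1$ extra chips to $v_n$ to obtain $(\bm{s}^{(n-1)}, s_n)$. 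Summing over $s_n = 1, 2, \ldots, \ell$, the number of self-reachable configurations with $s_n \geq 1$ is $\sum_{k=1}^{\ell} C_{\ell-k,\,n-1}$.

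Next, for configurations with $s_n = 0$, I observe that the two-vertex subtree $\{v_{n-1}, v_n\}$ forces $s_{n-1} \geq 1$ via Theorem~\ref{srcsubtreeresult}. Writing such a configuration as $(\bm{t}^{(n-1)} + \bm{e}_{n-1}^{(n-1)},\, 0)$ with $\bm{t}^{(n-1)} \in \mathbb{Z}_{\geq 0}^{n-1}$ having $\ell - 1$ chips, Corollary~\ref{AddChipToLeafOrNeighbor}((1)$\Leftrightarrow$(2)) says this is self-reachable on $T$ if and only if $\bm{t}^{(n-1)}$ is self-reachable on $T \setminus v_n$. This gives a bijection yielding $C_{\ell-1,\,n-1}$ configurations. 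Combining both cases,
\begin{equation*}
    C_{\ell,n} \;=\; C_{\ell-1,\,n-1} + \sum_{k=1}^{\ell} C_{\ell-k,\,n-1}.
\end{equation*}

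To extract the three-term recurrence, I apply the same identity with $\ell$ replaced by $\ell-1$:
\begin{equation*}
    C_{\ell-1,n} \;=\; C_{\ell-2,\,n-1} + \sum_{k=1}^{\ell-1} C_{\ell-1-k,\,n-1},
\end{equation*}
and subtract. All but two terms telescope, leaving $C_{\ell,n} - C_{\ell-1,n} = 2C_{\ell-1,\,n-1} - C_{\ell-2,\,n-1}$, as desired. The main subtlety to be careful about is verifying both directions of the $s_n \geq 1$ bijection, since the forward direction needs Corollary~\ref{subtreeconfigsaresrcs} while the reverse combines Corollary~\ref{AddChipToLeafOrNeighbor} with Proposition~\ref{Add Chips to SRC}; the rest is bookkeeping and the telescoping subtraction.
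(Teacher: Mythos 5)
This theorem is quoted from \cite{First Paper}; the present paper states it without proof, so there is no in-paper argument to compare yours against. Your proof is nonetheless correct and self-contained. The partition of $S_\ell^{(T)}$ by the chip count $s_n$ on a leaf works: for $s_n\ge 1$ the forward direction is Corollary~\ref{subtreeconfigsaresrcs} and the reverse direction correctly chains Corollary~\ref{AddChipToLeafOrNeighbor} with Proposition~\ref{Add Chips to SRC}; for $s_n=0$ the forced inequality $s_{n-1}\ge 1$ from the two-vertex subtree makes the shift by $\bm{e}_{n-1}^{(n-1)}$ a genuine bijection onto $S_{\ell-1}^{(T\setminus v_n)}$ via Corollary~\ref{AddChipToLeafOrNeighbor}. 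The telescoping subtraction of the identity at $\ell$ and $\ell-1$ leaves exactly $2C_{\ell-1,n-1}-C_{\ell-2,n-1}$ on the right, and your intermediate identity $C_{\ell,n}=C_{\ell-1,n-1}+\sum_{k=1}^{\ell}C_{\ell-k,n-1}$ also reproduces the stated values $C_{1,2}=2$ and $C_{1,n}=0$ for $n\ge 3$, so the base cases are consistent. The one point worth stating explicitly is that the whole argument is an induction on $n$: writing the summands as $C_{\ell-k,n-1}$ presupposes that the count on the $(n-1)$-vertex tree $T\setminus v_n$ depends only on the number of vertices and not on the shape of that tree, which is the inductive hypothesis (and which your recurrence then propagates to $n$); making that induction explicit would close the only small gap in the write-up.
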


\subsection{Convex Polytopes}

We conclude with some brief background on polyhedral geometry. A \Def{polytope} $P\subset \R^n$ is convex hull of finitely many $\bm{x}_1^{(n)}, \bm{x}_2^{(n)}, \ldots, \bm{x}_k^{(n)}\in \R^n$. That is,
	\[
	P= \conv\left(\left\{\bm{x}_1^{(n)}, \bm{x}_2^{(n)}, \ldots, \bm{x}_k^{(n)}\right\}\right)\coloneq \left\{\bm{y}=\sum_{i=1}^k \gamma_i \bm{x}_i^{(n)} \ : \ 0\leq \gamma_i\leq 1 \mbox{ and } \sum_{i=1}^k \gamma_i=1\right\}
	\] 
 The inclusion-minimal set $V\subset \R^n$ such that $P = \conv(V)$ is called the \Def{vertex set} of $P$. 
A polytope $P$ is called \Def{lattice} (or \Def{integral}) if $V\subset \Z^n$. 
 For $t \in \mathbb{N}_1$, the \Def{$t$th dilation} of $P$ is the set $tP \coloneq \left\{t\bm{x}^{(n)} : \bm{x}^{(n)} \in P\right\}$. 
 A lattice polytope $P$ has the \Def{integer decomposition property}  if for all $t \in \mathbb{N}_1$ and $\bm{w}^{(n)} \in tP \cap \mathbb{Z}^n$, there exist $\bm{x}_1^{(n)}, \bm{x}_2^{(n)}, \ldots, \bm{x}_t^{(n)} \in P \cap \mathbb{Z}^{n}$ with 
\begin{equation*}
    \bm{w}^{(n)} = \sum_{j = 1}^{t}\bm{x}_j^{(n)}.
\end{equation*}
We say that two polytopes $P$ and $Q$ are \Def{unimodularly equivalent} if there exist $U \in \text{SL}_n(\mathbb{Z})$ and $\bm{b}^{(n)} \in \mathbb{Z}^n$ such that $Q = UP + \bm{b}^{(n)}$. For further background and details on the basics of lattice polytopes, we encourage the reader to see \cite{BeckRobins}.

In this paper, we will consider the polytopes which are generated as the convex hulls of self-reachable configurations on trees. For a tree $T$ and $\ell\in\mathbb{N}_1$, define
	\[
	\CP{\ell}{T} \coloneq \conv\left(S_\ell^{(T)} \right).
	\] 
We call these polytopes \Def{self-reachable configuration polytopes}.
One should note if $T$ has $n$ vertices that $\CP{\ell}{T}$ will have dimension at most $n-1$, as these polytopes by construction lie on the hyperplane $\sum_{i=1}^nx_i=\ell$.

\section{Minimally Self-Reachable Configurations and Unit Cubes}\label{sec:cubes}

In this section, we first consider a simple case of $\CP{\ell}{T}$. Specifically, we will consider these polytopes for {minimally self-reachable configurations}, which are defined as follows. 

\begin{definition}\label{minimalsrc}
    Let $T$ be a  $n$-vertex tree, and let $\bm{\mu}^{(n)}$ be a self-reachable configuration on $T$. We say that $\bm{\mu}^{(n)}$ is a \Def{minimally self-reachable configuration} on $T$ if $\bm{\mu}^{(n)}$ is self-reachable on $G$ and has exactly $n - 1$ chips on $T$.
\end{definition}

The term {minimally self-reachable} is chosen because \emph{any} configuration on an $n$-vertex tree with $\ell\leq n-2$ chips will never be self-reachable by Theorem \ref{srcsubtreeresult}.
Additionally, one should note that a chip configuration with $n-1$ chips need not necessarily be a self-reachable configuration. 
An easy example of this behavior would be to consider any tree $T$ with $n\geq 3$ vertices where all chips are placed on a leaf. 
That being said, we can more precisely ascertain necessary conditions for minimally self-reachable configurations as in the following lemma.

\begin{lemma}\label{nomorechipsthandegree}
    Let $T$ be an $n$-vertex tree, and let $\bm{\mu}^{(n)}$ be a minimally self-reachable configuration on $T$. Then on any vertex of $T$, $\bm{\mu}^{(n)}$ cannot have any more chips than the degree of that vertex. Equivalently, for all $i \in \mathbb{N}_{[1,n]}$,
    \begin{equation}
        \bm{e}_i^{(n)} \cdot \bm{\mu}^{(n)} \le \deg(v_i).\notag
    \end{equation}
\end{lemma}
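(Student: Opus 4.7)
The plan is to prove the contrapositive via a direct chip-counting argument that exploits the tree structure and the subtree lower bound from Theorem \ref{srcsubtreeresult}. Specifically, I will suppose for contradiction that some vertex $v_i$ carries strictly more chips than $\deg(v_i)$, and then I will show that $\bm{\mu}^{(n)}$ is forced to have at least $n$ chips, contradicting the defining property that $\bm{\mu}^{(n)}$ is minimally self-reachable.

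The central observation is that removing a vertex $v_i$ from a tree $T$ decomposes $T \setminus v_i$ into exactly $\deg(v_i)$ connected components, each of which is itself a subtree of $T$. If these components are $T_1, T_2, \ldots, T_{\deg(v_i)}$ with $|V(T_j)| = m_j$, then $\sum_{j=1}^{\deg(v_i)} m_j = n - 1$. Since each $T_j$ is a subtree of $T$ on $m_j$ vertices, Theorem \ref{srcsubtreeresult} applied to the self-reachable configuration $\bm{\mu}^{(n)}$ guarantees that the chips $\bm{\mu}^{(n)}$ places on $T_j$ sum to at least $m_j - 1$.

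Summing these lower bounds together with the chips on $v_i$ itself gives
\begin{equation*}
    \sum_{k=1}^{n} \mu_k \;\geq\; \mu_i + \sum_{j=1}^{\deg(v_i)} (m_j - 1) \;=\; \mu_i + (n - 1) - \deg(v_i).
\end{equation*}
If one assumes $\mu_i \geq \deg(v_i) + 1$, the right-hand side is at least $n$, while the left-hand side equals $n - 1$ by the definition of a minimally self-reachable configuration. This is the desired contradiction, so $\mu_i \leq \deg(v_i)$ must hold for every $i \in \mathbb{N}_{[1,n]}$.

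I do not expect a real obstacle here: the proof is essentially a one-line counting argument once the tree is split at $v_i$. The only mild subtlety is making sure each connected component of $T \setminus v_i$ is explicitly recognized as a subtree of $T$ so that Theorem \ref{srcsubtreeresult} applies to it directly. A brief sentence justifying this decomposition (which is standard for trees, since removing a single vertex from a tree yields a forest whose components are subtrees) should suffice.
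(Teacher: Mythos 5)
Your proposal is correct and is essentially the same argument as the paper's: both decompose $T \setminus v_i$ into $\deg(v_i)$ subtrees, apply Theorem \ref{srcsubtreeresult} to each component to get the $m_j - 1$ lower bounds, and derive a chip count of at least $n$, contradicting minimality.
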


\begin{proof}
    Suppose towards contradiction that on some vertex $v_i$,
    \begin{equation*}
        \bm{e}_i^{(n)} \cdot \bm{\mu}^{(n)} > d,
    \end{equation*}
    where we let $d = \deg(v_i)$. The graph $T \setminus v_i$ will contain $d$ disjoint trees, which we will label $T_1, T_2, \ldots, T_d$. For each $j \in \mathbb{N}_{[1,n]}$, let $m_j$ be the number of vertices on $T_j$. Since $\bm{\mu}^{(n)}$ is self-reachable on $T$, $\bm{\mu}^{(n)}$ must contain at least $m_j - 1$ chips on $T_j$ for each $j \in \mathbb{N}_{[1,n]}$ by Theorem \ref{srcsubtreeresult}. Therefore, the number of chips in $\bm{\mu}^{(n)}$ has on $T$ satisfies the following  bound:
    \begin{equation}
        \bm{1}^{(n)} \cdot \bm{\mu}^{(n)} > d + \sum_{j = 1}^{d}(m_j - 1) = d + (n - 1) - d = n - 1.
    \end{equation}
    This contradicts the minimally self-reachable configuration assumption.
    
\end{proof}

Since minimally self-reachable configurations are much more restrictive than general self-reachable configurations, we can explicitly enumerate them.  

\begin{lemma}\label{countminimalsrcs}
    Let $T$ be an $n$-vertex tree. Then
    \begin{equation*}
        \left|S_{n - 1}^{(T)}\right| = 2^{n - 1}.
    \end{equation*}
\end{lemma}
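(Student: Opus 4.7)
The plan is to proceed by induction on $n$, using a leaf-deletion bijection driven by Corollary~\ref{AddChipToLeafOrNeighbor}. For the base case I would take $n = 1$: the zero configuration on a single vertex is self-reachable via a single (trivial) firing of that vertex, so $|S_0^{(T)}| = 1 = 2^0$.

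For the inductive step with $n \geq 2$, I would fix a leaf $v_n$ adjacent to $v_{n-1}$ and split $S_{n-1}^{(T)}$ according to the value $\mu_n := \bm{e}_n^{(n)} \cdot \bm{\mu}^{(n)}$. Because $v_n$ is a leaf, Lemma~\ref{nomorechipsthandegree} immediately gives $\mu_n \in \{0,1\}$. In the case $\mu_n = 1$, writing $\bm{\mu}^{(n)} = (\bm{\nu}^{(n-1)}, 1)$, the equivalence (1) $\Leftrightarrow$ (3) of Corollary~\ref{AddChipToLeafOrNeighbor} identifies such configurations bijectively with self-reachable $\bm{\nu}^{(n-1)}$ on $T \setminus v_n$ having exactly $n - 2$ chips, i.e., with $S_{n-2}^{(T \setminus v_n)}$. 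In the case $\mu_n = 0$, Theorem~\ref{srcsubtreeresult} applied to the $2$-vertex subtree $\{v_{n-1}, v_n\}$ forces $\mu_{n-1} \geq 1$, so I can write $\bm{\mu}^{(n)} = (\bm{\nu}^{(n-1)} + \bm{e}_{n-1}^{(n-1)}, 0)$, and (1) $\Leftrightarrow$ (2) of the same corollary again produces a bijection with $S_{n-2}^{(T \setminus v_n)}$.

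The two resulting maps are injective with disjoint images (distinguished by $\mu_n$) and together exhaust $S_{n-1}^{(T)}$, so $|S_{n-1}^{(T)}| = 2 \cdot |S_{n-2}^{(T \setminus v_n)}| = 2 \cdot 2^{n-2} = 2^{n-1}$ by the inductive hypothesis on the $(n-1)$-vertex tree $T \setminus v_n$.

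The step I expect to be the main obstacle -- though in this setting it turns out to be quick -- is confirming $\mu_n \leq 1$; without this bound one would have to handle configurations that pile extra chips onto the leaf, breaking the clean $2$-to-$1$ structure. Lemma~\ref{nomorechipsthandegree} dispatches it immediately since $\deg(v_n) = 1$. As a sanity check one can also derive the count from the recurrence of Theorem~\ref{srccountingtheorem}: the full tree $T$ requires at least $n - 1$ chips, so $C_{n-2,n} = 0$ and $C_{n-3,n-1} = 0$, and (\ref{eq4.1}) collapses to $C_{n-1,n} = 2\,C_{n-2,n-1}$, yielding $2^{n-1}$ by the same induction.
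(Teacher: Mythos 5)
Your argument is correct, but your primary route is not the one the paper takes: the paper's proof is exactly what you relegate to a ``sanity check'' at the end. It invokes the recurrence of Theorem~\ref{srccountingtheorem}, kills the terms $C_{n-2,n}$ and $C_{n-3,n-1}$ via Theorem~\ref{srcsubtreeresult} (a tree on $m$ vertices needs at least $m-1$ chips), and unwinds $C_{n-1,n}=2C_{n-2,n-1}$ down to the initial conditions. Your main argument instead builds the count directly: delete a leaf $v_n$, use Lemma~\ref{nomorechipsthandegree} to force $\mu_n\in\{0,1\}$, and apply the two equivalences of Corollary~\ref{AddChipToLeafOrNeighbor} to get a $2$-to-$1$ correspondence with $S_{n-2}^{(T\setminus v_n)}$. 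Both are sound; the recurrence proof is shorter given that Theorem~\ref{srccountingtheorem} is already available, while your bijective version is more self-contained and, notably, is essentially the structure the paper re-derives anyway inside the proof of Theorem~\ref{cubeunimodularequivalence} (the set $A$ there is precisely the union of your two images, and the paper uses this very lemma to conclude $S_{n-1}^{(T)}=A$). So your approach would let one prove the lemma and the set-equality $S_{n-1}^{(T)}=A$ in one stroke, at the cost of not reusing the enumerative machinery. No gaps: the disjointness by the value of $\mu_n$, the injectivity of each map, and the base case $n=1$ (the empty configuration is self-reachable since firing the isolated vertex is legal and idle) all check out.
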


\begin{proof}
    Adopting notation from Theorem \ref{srccountingtheorem}, we have that $\left|S_{n - 1}^{(T)}\right| = C_{n - 1, n}$. By Theorem \ref{srccountingtheorem},
    \begin{align*}
        C_{n - 1, n} = C_{n - 2, n} + 2C_{n - 2, n - 1} - C_{n - 3, n - 1}.
    \end{align*}
	By Theorem \ref{srcsubtreeresult}, $C_{n-2,n}=C_{n-3,n-1}=0$.
	This reduces the recurrence to $C_{n - 1, n} = 2C_{n - 2, n - 1}$ and the result follows by the initial conditions $C_{0, 1} = 1$ and $C_{1, 2} = 2$ given in Theorem \ref{srccountingtheorem}.

\end{proof}

In terms of the polytope $\CP{n-1}{T}$, we can provide the  following explicit description via an unimodular equivalence with a well-known polytope: the unit cube. 


\begin{theorem}\label{cubeunimodularequivalence}
    Let $T$ be an $n$-vertex tree. Then $\CP{n-1}{T} \cong [0, 1]^{n - 1} \times \{0\}$.
\end{theorem}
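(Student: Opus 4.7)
The plan is to construct an explicit affine unimodular transformation $\phi : \R^n \to \R^n$ carrying $\CP{n-1}{T}$ onto $[0,1]^{n-1} \times \{0\}$. Root $T$ at $v_n$ and order vertices in a DFS post-order so that descendants precede ancestors. For each non-root vertex $v_i$ let $T_i$ be the subtree rooted at $v_i$ with $m_i = |V(T_i)|$, and define $\phi(\bm{c}^{(n)}) = \bm{y}^{(n)}$ via $y_i = \sum_{v_j \in T_i} c_j - (m_i - 1)$ for $i \in \mathbb{N}_{[1,n-1]}$ and $y_n = \sum_{j=1}^{n}c_j - (n-1)$. In the chosen ordering, the first $n-1$ rows of the linear matrix $B$ form a lower-triangular block with $1$'s along the diagonal (only vertices $v_j$ with $j \leq i$ can lie in $T_i$, and $v_i$ itself does), the last column is $\bm{e}_n^{(n)}$ (since $v_n$ lies in no proper rooted subtree), and the last row is $\bm{1}^{(n)}$; cofactor expansion along the last column yields $\det B = 1$, so $\phi$ is unimodular.

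I would then show $\phi$ maps each minimally self-reachable configuration $\bm{\mu}^{(n)}$ into $\{0,1\}^{n-1} \times \{0\}$. Since $\bm{\mu}^{(n)}$ has $n-1$ chips, $y_n = 0$ immediately. For $i \in \mathbb{N}_{[1,n-1]}$, Theorem~\ref{srcsubtreeresult} applied to the $m_i$-vertex subtree $T_i$ yields $y_i \geq 0$, while applying the same theorem to the complementary $(n - m_i)$-vertex subtree $T \setminus T_i$ forces that subtree to contain at least $n - m_i - 1$ chips, and hence $T_i$ to contain at most $m_i$, giving $y_i \leq 1$. As $y_i$ is integer, $y_i \in \{0,1\}$.

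For the reverse inclusion, I would invert $\phi$ to obtain $c_i = y_i + |C_i| - \sum_{c \in C_i} y_c$, where $C_i$ is the set of children of $v_i$ and we set $y_n := 0$; nonnegativity and $\sum_i c_i = n-1$ are both immediate. The main obstacle is verifying self-reachability via Theorem~\ref{srcsubtreeresult}. I would prove that for any $m^*$-vertex subtree $T^*$ with highest vertex $v_h$, letting $B(T^*)$ denote the set of vertices outside $T^*$ whose parent lies in $T^*$, the identity
\begin{equation*}
    \sum_{v \in T^*} c_v = y_h + (m^* - 1) + \sum_{c \in B(T^*)}(1 - y_c) \geq m^* - 1
\end{equation*}
holds by a direct edge-counting argument (the edges of $T$ with parent endpoint in $T^*$ split into $m^* - 1$ internal edges plus $|B(T^*)|$ outgoing edges), with the inequality following from $y_h \geq 0$ and $1 - y_c \geq 0$. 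Since $\phi$ then gives a bijection $S_{n-1}^{(T)} \leftrightarrow \{0,1\}^{n-1} \times \{0\}$, we conclude $\phi(\CP{n-1}{T}) = \conv(\phi(S_{n-1}^{(T)})) = [0,1]^{n-1} \times \{0\}$.
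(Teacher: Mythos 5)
Your proposal is correct, and it takes a genuinely different route from the paper. The paper proceeds by induction on a leaf $v_n$ attached to $v_{n-1}$: it first uses the enumeration $\left|S_{n-1}^{(T)}\right| = 2^{n-1}$ (Lemma \ref{countminimalsrcs}, itself derived from the counting recurrence) together with Corollary \ref{AddChipToLeafOrNeighbor} to identify $S_{n-1}^{(T)}$ as the set of configurations $\left(\bm{s}^{(n-1)},1\right)$ and $\left(\bm{s}^{(n-1)}+\bm{e}_{n-1}^{(n-1)},0\right)$ with $\bm{s}^{(n-1)}$ ranging over $S_{n-2}^{(T\setminus v_n)}$, and then exhibits the single elementary matrix $U = I_n + E_{n-1,n}$ with shift $-\bm{e}_{n-1}^{(n)}$ realizing $\CP{n-1}{T}\cong\CP{n-2}{T\setminus v_n}\times[0,1]$; the global equivalence is the composite of these elementary steps. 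You instead write down one explicit global unimodular affine map built from the rooted-subtree partial sums, and your verification rests only on Theorem \ref{srcsubtreeresult} applied to each rooted subtree $T_i$ and its (connected) complement $T\setminus T_i$ — a point worth stating explicitly, since connectivity of the complement is what makes the upper bound $y_i\le 1$ available. Your identity $\sum_{v\in T^*}c_v = y_h + (m^*-1) + \sum_{c\in B(T^*)}(1-y_c)$ checks out by the edge count you describe, and it handles the reverse inclusion cleanly. What your approach buys: it is self-contained (it does not need the recurrence of Theorem \ref{srccountingtheorem} and in fact reproves Lemma \ref{countminimalsrcs} as a corollary of the bijection), it gives the cube coordinates a transparent combinatorial meaning ($y_i$ is the excess of chips on the subtree below $v_i$ over the forced minimum $m_i-1$), and it avoids induction. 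What the paper's approach buys: it reuses the leaf-deletion machinery (Corollary \ref{AddChipToLeafOrNeighbor}) already developed, and the explicit identification of $S_{n-1}^{(T)}$ as the doubled set $A$ is itself used later in the proof of Lemma \ref{Near-Minimal Configurations are Vertices}.
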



\begin{proof}
	Notice that when $n=1$, $\CP{n-1}{T}=\{0\}$. 
	For $n\geq 2$, without loss of generality, suppose that $v_n$ is a leaf connected to $v_{n-1}$.
	Note that it sufficient to show  
	\begin{equation}
    \CP{n-1}{T} \cong \CP{n-2}{T\setminus v_n} \times [0,1]
    \end{equation} 
	as the result will follow by induction.
	For simplicity of notation, denote $Q=\CP{n-2}{T\setminus v_n} \times [0,1]$.
    %
%
    By Lemma \ref{countminimalsrcs}, we have that $\left|S_{n - 2}^{(T \setminus v_n)}\right| = 2^{n - 2}$. Hence, we let
    \begin{align*}
        S_{n - 2}^{(T \setminus v_n)} = \left\{\bm{s}_1^{(n - 1)}, \bm{s}_2^{(n - 1)}, \ldots \bm{s}_{2^{n - 2}}^{(n - 1)}\right\}.
    \end{align*}
	%
	Note that an arbitrary element of $Q$ can be represented as
    \begin{align}\label{eq6.1}
        \bm{y}^{(n)} = \sum_{i = 1}^{2^{n - 2}}\kappa_i\left(\bm{s}_i^{(n - 1)}, 1\right) + \sum_{i = 1}^{2^{n - 2}}\lambda_i\left(\bm{s}_i^{(n - 1)}, 0\right),
    \end{align}
    where $0 \le \kappa_i, \lambda_i$ for all $i \in \mathbb{N}_{[1,2^{n-2}]}$ and
    \begin{align*}
        \sum_{i = 1}^{2^{n - 2}}\kappa_i + \sum_{i = 1}^{2^{n - 2}}\lambda_i = 1.
    \end{align*}
    We now let
    \begin{align*}
        A &= \Big\{\left(\bm{s}_1^{(n - 1)}, 1\right), \left(\bm{s}_2^{(n - 1)}, 1\right), \ldots, \left(\bm{s}_{2^{n - 2}}^{(n - 1)}, 1\right), \\
        &\left(\bm{s}_1^{(n - 1)} + \bm{e}_{n - 1}^{(n - 1)}, 0\right), \left(\bm{s}_2^{(n - 1)} + \bm{e}_{n - 1}^{(n - 1)}, 0\right), \ldots, \left(\bm{s}_{2^{n - 2}}^{(n - 1)} + \bm{e}_{n - 1}^{(n - 1)}, 0\right)\Big\}.
    \end{align*}
    By Corollary \ref{AddChipToLeafOrNeighbor}, we have that $S_{n - 1}^{(T)} \subseteq A$. Note that all of the chip configurations in $A$ are distinct because the $\bm{s}_i^{(n - 1)}$s are distinct. So $A$ contains $2^{n - 1}$ distinct self-reachable configurations on $T$, each with $n - 1$ chips. Therefore, Lemma \ref{countminimalsrcs} implies that $S_{n - 1}^{(T)} = A$.

    
    We now let $U = I_n + E_{n - 1, n}$, where $I_n$ denotes the $n \times n$ identity matrix and $E_{i, j}$ denotes the $n \times n$ matrix with a 1 in row $i$ column $j$ and 0s in all other entries, and we let $\bm{b}^{(n)} = -\bm{e}_{n - 1}^{(n)}$. We claim that for this choice of $U$ and $\bm{b}^{(n)}$, $Q = U\left(\CP{n-1}{T}\right) + \bm{b}^{(n)}$. We let $\bm{x}^{(n)} \in \CP{n-1}{T}$ be arbitrary. Then
    \begin{align*}
        \bm{x}^{(n)} = \sum_{i = 1}^{2^{n - 2}}\kappa_i\left(\bm{s}_i^{(n - 1)}, 1\right) + \sum_{i = 1}^{2^{n - 2}}\lambda_i\left(\bm{s}_i^{(n - 1)} + \bm{e}_{n - 1}^{(n - 1)}, 0\right),
    \end{align*}
    where $0 \le \kappa_i, \lambda_i$ for all $i \in \mathbb{N}_1^{2^{n - 2}}$ and
    \begin{align*}
        \sum_{i = 1}^{2^{n - 2}}\kappa_i + \sum_{i = 1}^{2^{n - 2}}\lambda_i = 1.
    \end{align*}
    We have that
   \begin{equation}\label{eq6.2}
        \begin{split}
            U\bm{x}^{(n)} &= \sum_{i = 1}^{2^{n - 2}}\kappa_iU\left(\bm{s}_i^{(n - 1)}, 1\right) + \sum_{i = 1}^{2^{n - 2}}\lambda_iU\left(\bm{s}_i^{(n - 1)} + \bm{e}_{n - 1}^{(n - 1)}, 0\right) \\
            &= \sum_{i = 1}^{2^{n - 2}}\kappa_i\left(\bm{s}_i^{(n - 1)} + \bm{e}_{n - 1}^{(n - 1)}, 1\right) + \sum_{i = 1}^{2^{n - 2}}\lambda_i\left(\bm{s}_i^{(n - 1)} + \bm{e}_{n - 1}^{(n - 1)}, 0\right) \\
            &= \sum_{i = 1}^{2^{n - 2}}\kappa_i\left(\bm{s}_i^{(n - 1)}, 1\right) + \sum_{i = 1}^{2^{n - 2}}\lambda_i\left(\bm{s}_i^{(n - 1)}, 0\right) + \bm{e}_{n - 1}^{(n)}.
        \end{split}
    \end{equation}
%
%
%
    Therefore,
    \begin{align}\label{eq6.3}
        U\bm{x}^{(n)} + \bm{b}^{(n)} = \sum_{i = 1}^{2^{n - 2}}\kappa_i\left(\bm{s}_i^{(n - 1)}, 1\right) + \sum_{i = 1}^{2^{n - 2}}\lambda_i\left(\bm{s}_i^{(n - 1)}, 0\right).
    \end{align}
    Note that the right-hand side of \eqref{eq6.2} has the same form as the right-hand side of \eqref{eq6.1}, which is an element of $Q$. Thus, $U\bm{x}^{(n)} + \bm{b}^{(n)} \in Q$, which implies that $U\left(\CP{n-1}{T}\right) + \bm{b}^{(n)} \subseteq Q$. We now let
    \begin{align*}
        \bm{y}^{(n)} = \sum_{i = 1}^{2^{n - 2}}\kappa_i\left(\bm{s}_i^{(n - 1)}, 1\right) + \sum_{i = 1}^{2^{n - 2}}\lambda_i\left(\bm{s}_i^{(n - 1)}, 0\right) \in Q
    \end{align*}
    be arbitrary as in \eqref{eq6.1}. Then by \eqref{eq6.2} and \eqref{eq6.3}, we have that for 
    \begin{align*}
        \bm{x}^{(n)} = \sum_{i = 1}^{2^{n - 2}}\kappa_i\left(\bm{s}_i^{(n - 1)}, 1\right) + \sum_{i = 1}^{2^{n - 2}}\lambda_i\left(\bm{s}_i^{(n - 1)} + \bm{e}_{n - 1}^{(n - 1)}, 0\right) \in \CP{n-1}{T},
    \end{align*}
    $\bm{y}^{(n)} = U\bm{x}^{(n)} + \bm{b}^{(n)}$. This implies that $Q \subseteq U\left(\CP{n-1}{T}\right) + \bm{b}^{(n)}$. Thus, we obtain $Q = U\left(\CP{n-1}{T}\right) + \bm{b}^{(n)}$, as desired.
\end{proof}

We conclude this section by noting that Theorem \ref{cubeunimodularequivalence} implies that $\CP{n-1}{T}$ has the integer decomposition property (c.f. \cite{BeckRobins}). 
Additionally, since the unit cube is a $0/1$ polytope, we ascertain the vertex structure of $\CP{n-1}{T}$, namely that every minimally self-reachable configuration is a vertex.
In general, proving the integer decomposition property and describing the vertex structure is more involved and will be discussed in Sections \ref{sec:IDP} and \ref{sec:vertices} respectively.





\section{The Integer Decomposition Property}\label{sec:IDP}

In this section, we study the integer decomposition property for $\CP{\ell}{T}$.
In particular, we will build towards the following theorem. 

\begin{theorem}\label{thm:IDP}
   For any $n$-vertex tree $T$ and any $\ell\in\mathbb{N}_{n-1}$, the polytope $\CP{\ell}{T}$ has the integer decomposition property.
\end{theorem}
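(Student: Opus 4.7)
The plan is to prove the theorem by strong induction on $\ell$. The base case $\ell=n-1$ is handled by Theorem \ref{cubeunimodularequivalence}, since unit cubes have the integer decomposition property. For the inductive step $\ell \geq n$, I would first establish the Minkowski sum identity
\[
\CP{\ell}{T} = \CP{\ell-1}{T} + \Delta_{n-1},
\]
where $\Delta_{n-1} := \conv\left\{\bm{e}_1^{(n)},\ldots,\bm{e}_n^{(n)}\right\}$ is the standard simplex. The containment $\supseteq$ is immediate from Proposition \ref{Add Chips to SRC}, expanding a point on the right as a bilinear combination of self-reachable configurations and standard basis vectors. For $\subseteq$, I would prove a \emph{chip removal} lemma: every $\bm{s}^{(n)} \in S_\ell^{(T)}$ with $\ell \geq n$ can be written as $\bm{s}'^{(n)} + \bm{e}_j^{(n)}$ for some $\bm{s}'^{(n)} \in S_{\ell-1}^{(T)}$. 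Calling an $m$-vertex subtree \emph{tight} for $\bm{s}^{(n)}$ when it carries exactly $m-1$ chips, a short calculation using Theorem \ref{srcsubtreeresult} and the modular identity for chip sums on overlapping subtrees shows that if two tight subtrees overlap, then their union and intersection are both tight; hence the maximal tight subtrees $M_1,\ldots,M_r$ are pairwise disjoint. If every vertex with a chip lay in some $M_i$, then $\ell = \sum_{i=1}^{r}(|V(M_i)|-1) \leq n-r \leq n-1$, contradicting $\ell \geq n$, so a chip can be removed while preserving self-reachability.

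I would also observe that $\CP{\ell}{T} \cap \mathbb{Z}^n = S_\ell^{(T)}$, since the subtree inequalities from Theorem \ref{srcsubtreeresult} pass through any convex combination. With these facts, for a lattice point $\bm{w}^{(n)} \in t\CP{\ell}{T}$ (equivalently, $\bm{w}^{(n)} \in \mathbb{Z}_{\geq 0}^n$ with $\sum w_j = t\ell$ and $\sum_{v_j \in T^*} w_j \geq t(|V(T^*)|-1)$ for every subtree $T^*$), proving IDP reduces to producing $\bm{y}^{(n)} \in t\Delta_{n-1} \cap \mathbb{Z}^n$ with $\bm{y}^{(n)} \leq \bm{w}^{(n)}$ and $\bm{w}^{(n)} - \bm{y}^{(n)} \in t\CP{\ell-1}{T} \cap \mathbb{Z}^n$. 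Once such $\bm{y}^{(n)}$ is in hand, the inductive hypothesis decomposes $\bm{w}^{(n)}-\bm{y}^{(n)}$ into $t$ elements of $S_{\ell-1}^{(T)}$ and $\bm{y}^{(n)}$ splits trivially into $t$ standard basis vectors; pairing summands and applying Proposition \ref{Add Chips to SRC} yields a decomposition of $\bm{w}^{(n)}$ into $t$ elements of $S_\ell^{(T)}$.

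Constructing $\bm{y}^{(n)}$ is the step I expect to be the main obstacle, and I plan to handle it as a separate lemma via an iterative greedy procedure. Writing $\sigma(T^*) := \sum_{v_j \in T^*} w_j - t(|V(T^*)|-1) \geq 0$, I would call a subtree \emph{saturated} at a partially-built $\bm{y}^{(n)}$ when $\sum_{v_j \in T^*} y_j = \sigma(T^*)$, and at each stage place the next chip on some vertex lying in no currently saturated subtree. The same modular identity applied to these running constraints shows saturated subtrees are closed under overlapping unions and intersections, so their maximal representatives $M_1,\ldots,M_r$ are pairwise disjoint. If these maximal saturated subtrees were to cover $V(T)$ after only $k<t$ chips have been placed, the chip count would satisfy $k = \sum_i \sigma(M_i) = t(\ell - n + r)$, which is at least $t$ whenever $r \geq 1$, while $r = 0$ would force the cover to be empty; either case contradicts $k<t$. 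Hence a free vertex persists at every stage, the iteration produces a valid $\bm{y}^{(n)}$ after $t$ steps, and the induction closes.
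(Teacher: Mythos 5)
Your Minkowski-sum route is genuinely different from the paper's argument (the paper inducts on the dilation factor $t$, subtracting one whole self-reachable configuration with $\ell$ chips at a time via a leaf-deletion induction on $n$), and the two supporting constructions you give are both sound. The chip-removal lemma establishing $\CP{\ell}{T}=\CP{\ell-1}{T}+\conv\left\{\bm{e}_1^{(n)},\dots,\bm{e}_n^{(n)}\right\}$ for $\ell\ge n$ is correct (it is essentially the $t=1$ case of Lemma \ref{subtractoffchips} plus a chip count), and your greedy placement of $t$ unit chips avoiding saturated subtrees is valid: the modularity of your slack function $\sigma$ on overlapping subtrees does make saturated subtrees closed under union and intersection, and the count $k=t(\ell-n+r)\ge t$ for $r\ge 1$ correctly shows that an unsaturated vertex survives while $k<t$, using $\ell\ge n$.

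The gap is the parenthetical ``equivalently.'' To apply the inductive hypothesis (IDP for $\CP{\ell-1}{T}$) to $\bm{w}^{(n)}-\bm{y}^{(n)}$, you need the implication you never prove: that every nonnegative integer vector with $t(\ell-1)$ chips satisfying all the subtree inequalities actually lies in $t\cdot\CP{\ell-1}{T}$. Lemma \ref{chipconfigdilation} gives only the converse direction; the direction you need is essentially the substance of the theorem, and the paper devotes Lemmas \ref{theorem6.8} and \ref{cor6.9} to it. The natural repair --- strengthening the induction to ``every $t$-dilated configuration with $t\ell$ chips is a sum of $t$ elements of $S_\ell^{(T)}$,'' which would subsume both IDP and the lattice-point characterization --- breaks precisely at your base case: when $\ell=n-1$ the whole tree is saturated from the outset ($\sigma(T)=0$), so your greedy cannot place even one chip, and Theorem \ref{cubeunimodularequivalence} only decomposes lattice points of $t\cdot\CP{n-1}{T}$, with no argument that these coincide with the minimally $t$-dilated configurations. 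So you still need an ingredient like the paper's Lemma \ref{theorem6.8} --- subtracting an entire minimally self-reachable configuration from a $(t+1)$-dilated one --- to seed the induction.
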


To begin, we must introduce some new terminology. 
Given an $n$-vertex tree $T$ equipped with chip configuration $\bm{s}^{(n)}$ and $t\in \mathbb{N}_1$, we say that $\bm{s}^{(n)}$ is a \Def{$t$-dilated configuration} on $T$ if $\bm{s}^{(n)}$ contains at least $t(m - 1)$ chips on every $m$-vertex subtree of $T$. Observe that $\bm{s}^{(n)}$ is self-reachable if and only if $\bm{s}^{(n)}$ is 1-dilated by Theorem \ref{srcsubtreeresult}.
We say that $\bm{s}^{(n)}$ is \Def{minimally $t$-dilated} on $T$ if $\bm{s}^{(n)}$ is $t$-dilated on $T$ and has exactly $t(n - 1)$ chips on $T$.
The inspiration for the name $t$-dilated derives from the observation of the following lemma. 


\begin{lemma}\label{chipconfigdilation}
    Let $T$ be an $n$-vertex tree. For each $t \in \mathbb{N}_1$ and for all $\bm{x}^{(n)} \in t \cdot \CP{\ell}{T} \cap \mathbb{Z}^n$, $\bm{x}^{(n)}$ is a $t$-dilated configuration on $T$. 
\end{lemma}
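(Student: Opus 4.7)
The plan is to unpack both the definition of $t \cdot \CP{\ell}{T}$ and the characterization of self-reachable configurations on trees from Theorem~\ref{srcsubtreeresult}, then perform a direct convex-combination estimate on each subtree.

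First I would observe that if $\bm{x}^{(n)} \in t \cdot \CP{\ell}{T} \cap \mathbb{Z}^n$, then $\bm{x}^{(n)}/t \in \CP{\ell}{T}$, so by definition of convex hull there exist self-reachable configurations $\bm{s}_1^{(n)}, \ldots, \bm{s}_k^{(n)} \in S_\ell^{(T)}$ and coefficients $\gamma_1, \ldots, \gamma_k \ge 0$ with $\sum_i \gamma_i = 1$ such that
\[
\bm{x}^{(n)} \;=\; t \sum_{i=1}^{k} \gamma_i \, \bm{s}_i^{(n)}.
\]
Since each $\bm{s}_i^{(n)} \in \mathbb{Z}_{\ge 0}^n$ and the $\gamma_i$ are nonnegative, the hypothesis $\bm{x}^{(n)} \in \mathbb{Z}^n$ together with nonnegativity of entries confirms that $\bm{x}^{(n)}$ is an honest chip configuration on $T$, so the notion of $t$-dilated is applicable.

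Next, I would fix an arbitrary $m$-vertex subtree $T^*$ of $T$ and count the chips of $\bm{x}^{(n)}$ supported on $V(T^*)$. By Theorem~\ref{srcsubtreeresult}, each self-reachable configuration $\bm{s}_i^{(n)}$ carries at least $m-1$ chips on $T^*$, i.e.\ $\sum_{v_j \in V(T^*)} (\bm{s}_i^{(n)})_j \ge m-1$ for every $i$. Summing componentwise and pulling the convex combination through yields
\[
\sum_{v_j \in V(T^*)} (\bm{x}^{(n)})_j \;=\; t \sum_{i=1}^{k} \gamma_i \sum_{v_j \in V(T^*)} (\bm{s}_i^{(n)})_j \;\ge\; t \sum_{i=1}^{k} \gamma_i (m-1) \;=\; t(m-1),
\]
using $\sum_i \gamma_i = 1$ in the final equality. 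Since $T^*$ was an arbitrary subtree of $T$, this is exactly the definition of $\bm{x}^{(n)}$ being $t$-dilated on $T$.

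There is no real obstacle here; the argument is just the monotonicity of convex combinations applied to the subtree-chip-count functional $\bm{y} \mapsto \sum_{v_j \in V(T^*)} y_j$, which is linear, combined with the lower bound on that functional provided by Theorem~\ref{srcsubtreeresult}. The integrality hypothesis $\bm{x}^{(n)} \in \mathbb{Z}^n$ plays no role in the inequality itself but is used only to interpret $\bm{x}^{(n)}$ as a chip configuration, which is needed for the conclusion to be well-posed. I expect this lemma to serve primarily as the easy direction of a correspondence that will be upgraded, in later arguments toward Theorem~\ref{thm:IDP}, to a decomposition of $t$-dilated configurations into sums of $t$ self-reachable configurations.
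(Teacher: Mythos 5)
Your proof is correct and follows essentially the same route as the paper's: express $\bm{x}^{(n)}$ as $t$ times a convex combination of configurations in $S_\ell^{(T)}$, then apply the linear subtree-chip-count functional (the paper uses the characteristic vector $\bm{\chi}^{(n)}$ of the subtree) together with the lower bound from Theorem~\ref{srcsubtreeresult}. No discrepancies worth noting.
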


\begin{proof}
	Let $\CP{\ell}{T} = \left\{\bm{s}_1^{(n)}, \bm{s}_2^{(n)}, \ldots, \bm{s}_k^{(n)}\right\}$. By definition,    
    \begin{equation}
        \bm{x}^{(n)} = \sum_{i = 1}^k\lambda_i\left(t\bm{s}_i^{(n)}\right) = t\sum_{i = 1}^k\lambda_i\bm{s}_i^{(n)}\notag
    \end{equation}
    with $\lambda_i \ge 0$ for all $i \in \mathbb{N}_{[1,n]}$ and $\sum_{i = 1}^k\lambda_i = 1$. Now choose an arbitrary subtree of $T$ with $m$ vertices and let $\bm{\chi}^{(n)}$ be the characteristic vector of this subtree. Then by Theorem \ref{srcsubtreeresult},
    \begin{equation}
        \bm{\chi}^{(n)} \cdot \bm{x}^{(n)} = t\sum_{i = 1}^k\lambda_i\left(\bm{\chi}^{(n)} \cdot \bm{s}_i^{(n)}\right) \ge t\sum_{i = 1}^k\lambda_i\left(m - 1\right) = t(m - 1).\notag
    \end{equation}
\end{proof}

The special case of $t=1$ in Lemma \ref{chipconfigdilation} yields the following corollary that all lattice points with in $\CP{\ell}{T}$ are in fact self-reachable configurations on $T$. 

\begin{corollary}\label{SRC Convex Combination Closure}
    Let $T$ be an $n$-vertex tree. Then for all $\bm{x}^{(n)} \in \CP{\ell}{T} \cap \mathbb{Z}^n$, $\bm{x}^{(n)}$ is self-reachable on $T$.
\end{corollary}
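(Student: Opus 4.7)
The plan is to observe that this corollary is essentially a direct specialization of Lemma \ref{chipconfigdilation} combined with Theorem \ref{srcsubtreeresult}, which together reduce self-reachability to a purely combinatorial subtree-chip-count condition.

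First I would invoke Lemma \ref{chipconfigdilation} in the case $t=1$: this tells us that any $\bm{x}^{(n)} \in \CP{\ell}{T} \cap \mathbb{Z}^n$ is a $1$-dilated configuration, i.e., $\bm{x}^{(n)}$ has at least $m-1$ chips on every $m$-vertex subtree of $T$. Next, I would verify that $\bm{x}^{(n)}$ is genuinely a chip configuration, meaning its entries are nonnegative integers. Integrality is assumed by hypothesis, and nonnegativity follows either from observing that $\bm{x}^{(n)}$ lies in the convex hull of vectors in $\mathbb{Z}_{\geq 0}^n$, or equivalently from applying the $1$-dilated property to each single-vertex subtree ($m=1$), which gives $\bm{e}_i^{(n)} \cdot \bm{x}^{(n)} \geq 0$ for every $i$.

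Finally, I would apply Theorem \ref{srcsubtreeresult} in the forward direction: since $\bm{x}^{(n)}$ is a chip configuration on $T$ that contains at least $m-1$ chips on every $m$-vertex subtree of $T$, the configuration $\bm{x}^{(n)}$ is self-reachable on $T$. No step here presents a genuine obstacle; the result is immediate once Lemma \ref{chipconfigdilation} is in hand, and its chief content is to identify the defining inequalities of $\CP{\ell}{T}$ (at least at the level of integer points) with the subtree characterization of self-reachable configurations from Theorem \ref{srcsubtreeresult}.
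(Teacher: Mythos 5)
Your proposal is correct and matches the paper's argument exactly: the paper derives this corollary as the $t=1$ case of Lemma \ref{chipconfigdilation} combined with the subtree characterization of Theorem \ref{srcsubtreeresult}. Your additional check that $\bm{x}^{(n)}$ has nonnegative integer entries is a reasonable bit of extra care that the paper leaves implicit.
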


We now proceed to show a collection of necessary lemmata on $t$-dilated configurations in the eventual service of Theorem \ref{thm:IDP}.
To begin, we show that the property of minimally $t$-dialted subtrees extends to intersections and unions.

\begin{lemma}\label{combineminimalsrcs}
    Let $T$ be an $n$-vertex tree, and let $\bm{s}^{(n)}$ be a $t$-dilated configuration on $T$ for some $t \in \mathbb{N}_1$. Suppose $T_1$ and $T_2$ are subtrees of $T$ that satisfy the following properties:
    \begin{enumerate}
        \item $T_1$ and $T_2$ have at least one vertex in common, i. e. $T_1 \cap T_2 \neq \varnothing$, and
        \item $\bm{s}^{(n)}$ forms a minimally $t$-dilated configuration on both $T_1$ and $T_2$.
    \end{enumerate}
    Then $\bm{s}^{(n)}$ forms a minimally $t$-dilated configuration on both $T_1 \cap T_2$ and $T_1 \cup T_2$.
\end{lemma}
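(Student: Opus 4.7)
The plan is to argue by a two-sided inclusion-exclusion count of chips, exploiting the fact that $\bm{s}^{(n)}$ already attains the minimum chip count on $T_1$ and on $T_2$, so any slack in the required chip counts on $T_1 \cap T_2$ and $T_1 \cup T_2$ would push the total past what is available.

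First, I would verify the mild structural claim that both $T_1 \cap T_2$ and $T_1 \cup T_2$ are themselves subtrees of $T$. For the intersection: any two vertices $u, v \in T_1 \cap T_2$ are joined by a unique path in $T$, and this path lies in $T_1$ (as $T_1$ is a subtree) and in $T_2$ (as $T_2$ is a subtree), hence in the intersection. For the union: pick $w \in T_1 \cap T_2$; any $u \in T_1$ can be joined to $w$ inside $T_1$, any $v \in T_2$ can be joined to $w$ inside $T_2$, so $T_1 \cup T_2$ is connected. This lets us meaningfully apply the $t$-dilation hypothesis to both.

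Next, write $n_1 = |V(T_1)|$, $n_2 = |V(T_2)|$, $k = |V(T_1 \cap T_2)| \geq 1$, so $|V(T_1 \cup T_2)| = n_1 + n_2 - k$. Let $c_1, c_2, c_\cap, c_\cup$ denote the total chips of $\bm{s}^{(n)}$ on $T_1, T_2, T_1 \cap T_2, T_1 \cup T_2$ respectively. Since chips are vertex-based, ordinary inclusion-exclusion on the underlying vertex sets gives
\begin{equation*}
c_\cup + c_\cap = c_1 + c_2 = t(n_1 - 1) + t(n_2 - 1) = t(n_1 + n_2 - 2),
\end{equation*}
where the second equality uses the minimality hypothesis on $T_1$ and $T_2$. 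On the other hand, since $\bm{s}^{(n)}$ is $t$-dilated on $T$, the same property holds on every subtree of $T$; applied to the two subtrees above this yields the lower bounds $c_\cap \geq t(k - 1)$ and $c_\cup \geq t(n_1 + n_2 - k - 1)$, whose sum is exactly $t(n_1 + n_2 - 2)$. Combining equality with these two lower bounds forces both to be equalities:
\begin{equation*}
c_\cap = t(k - 1), \qquad c_\cup = t(n_1 + n_2 - k - 1).
\end{equation*}

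Finally, $t$-dilation of $\bm{s}^{(n)}$ on each of $T_1 \cap T_2$ and $T_1 \cup T_2$ is inherited from its $t$-dilation on $T$, since any subtree of $T_1 \cap T_2$ (resp.\ $T_1 \cup T_2$) is a subtree of $T$. Combined with the exact chip counts above, this shows $\bm{s}^{(n)}$ is minimally $t$-dilated on both $T_1 \cap T_2$ and $T_1 \cup T_2$. The only conceptual step requiring any care is the clean use of inclusion-exclusion together with the tight lower bounds; everything else is bookkeeping.
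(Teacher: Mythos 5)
Your proof is correct and follows essentially the same route as the paper's: inclusion--exclusion on chip counts combined with the two lower bounds $c_\cap \geq t(k-1)$ and $c_\cup \geq t(n_1+n_2-k-1)$ forced by $t$-dilation on $T$, whose sum matches the total exactly. The only addition is your explicit check that the intersection and union are subtrees, which the paper leaves implicit.
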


\begin{proof}
    Let $m_1=|V(T_1)|$, $m_2=|V(T_2)|$, $m_3=|V(T_1\cap T_2)|$, and $M=|V(T_1\cup T_2)|$.
    Additionally, let $\ell$ be the number of chips $\bm{s}^{(n)}$ has on $T_1 \cap T_2$, and $L$ be the number of chips $\bm{s}^{(n)}$ has on $T_1 \cup T_2$.
    %
    
    Our goal is to show that $\ell = t(m_3 - 1)$ and $L = t(M - 1)$. 
    It is immediately clear that $M=m_1+m_2-m_3$ and by property (2) above we also have
    %
    %
    \begin{equation}\label{eq30}
        L = t(m_1 - 1) + t(m_2 - 1) - \ell.
    \end{equation}
    Because $\bm{s}^{(n)}$ is $t$-dilated on $T$, $L \ge t(M - 1) = t(m_1 + m_2 - m_3 - 1)$ and $\ell \ge t(m_3 - 1)$. This, combined with \eqref{eq30}, implies that
    \begin{equation}
        \begin{split}
            t(m_1 + m_2 - m_3 - 1) &\le L = t(m_1 - 1) + t(m_2 - 1) - \ell \\
            &\le t(m_1 - 1) + t(m_2 - 1) - t(m_3 - 1) \\
            &= t(m_1 + m_2 - m_3 - 1).
        \end{split}
    \end{equation}
    Therefore, $t(m_1 + m_2 - m_3 - 1) = t(m_1 - 1) + t(m_2 - 1) - \ell$, so $\ell = t(m_3 - 1)$ and $L = t(m_1 - 1) + t(m_2 - 1) - t(m_3 - 1) = t(m_1 + m_2 - m_3 - 1) = t(M - 1)$. 
\end{proof}

The following lemma gives sufficient conditions for determining if a $t$-dilated configuration is minimally $t$-dilated. 

\begin{lemma}\label{Deduce t-dilated}
     Let $T$ be an $n$-vertex tree, and let $\bm{s}^{(n)}$ be a $t$-dilated configuration on $T$ for some $t \in \mathbb{N}_1$. Suppose there exists a set $\mathcal{T} = \{T_1, T_2, \ldots, T_k\}$ of subtrees of $T$ with the following properties:

     \begin{enumerate}
         \item Each vertex of $T$ is contained in $T_i$ for some $i \in \mathbb{N}_{[1,k]}$.

         \item $\bm{s}^{(n)}$ forms a minimally $t$-dilated configuration on $T_i$ for all $i \in \mathbb{N}_{[1,k]}$.
     \end{enumerate}
     
     Then $\bm{s}^{(n)}$ is minimally $t$-dilated on $T$.
\end{lemma}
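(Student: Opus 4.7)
The plan is to proceed by induction on $k$, using Lemma \ref{combineminimalsrcs} to merge pairs of intersecting subtrees until only one remains. The crux at each stage is showing that two subtrees in the current collection must share a vertex; this is where the hypothesis that $\bm{s}^{(n)}$ is $t$-dilated on all of $T$ enters, and I expect this to be the main obstacle.

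The base case $k=1$ is immediate: a connected subgraph of $T$ containing every vertex of $T$ must equal $T$, so the conclusion follows directly from hypothesis (2). For the inductive step with $k \geq 2$, once I locate indices $i \neq j$ with $T_i \cap T_j \neq \varnothing$, Lemma \ref{combineminimalsrcs} promotes $\bm{s}^{(n)}$ to being minimally $t$-dilated on $T_i \cup T_j$. Replacing $T_i$ and $T_j$ by $T_i \cup T_j$ in $\mathcal{T}$ yields a new collection $\mathcal{T}'$ of $k-1$ subtrees that still covers $V(T)$ and on which $\bm{s}^{(n)}$ is minimally $t$-dilated, so the inductive hypothesis applies to finish.

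For the hard step, I plan to argue by contradiction. Suppose no two subtrees in $\mathcal{T}$ share a vertex. Then by hypothesis (1) the vertex sets $\{V(T_i)\}$ form a partition of $V(T)$, and so the total number of chips on $T$ equals
\[ \bm{1}^{(n)} \cdot \bm{s}^{(n)} = \sum_{i = 1}^k t(|V(T_i)| - 1) = t(n - k). \]
On the other hand, applying the $t$-dilated hypothesis to $T$ itself (as an $n$-vertex subtree of $T$) forces this total to be at least $t(n-1)$. The resulting inequality $t(n-k) \geq t(n-1)$ gives $k \leq 1$, contradicting $k \geq 2$, so some intersecting pair must exist. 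Everything else in the argument is bookkeeping and a direct invocation of Lemma \ref{combineminimalsrcs}.
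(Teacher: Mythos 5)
Your proof is correct and follows essentially the same strategy as the paper: repeatedly merge intersecting subtrees via Lemma \ref{combineminimalsrcs} until the collection is pairwise disjoint, then use the hypothesis that $\bm{s}^{(n)}$ is $t$-dilated on $T$ to rule out having two or more disjoint pieces. The only point of divergence is the final contradiction, where you count chips globally over the resulting partition (getting $t(n-k)$ against the $t(n-1)$ required on the $n$-vertex subtree $T$ itself), whereas the paper joins two adjacent disjoint subtrees by an edge to exhibit a single subtree with too few chips; both arguments are valid.
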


\begin{proof}
	For now, assume that  $T_{k - 1}$ and $T_k$ have at least one vertex in common. 
    By Lemma \ref{combineminimalsrcs}, $\bm{s}^{(n)}$ forms a minimally $t$-dilated configuration on $T_{k - 1} \cup T_k$. Then $\{T_1, T_2, \ldots, T_{k - 2}, T_{k - 1} \cup T_k\}$ is a set of $k - 1$ trees that satisfies the two properties stated above. We can repeat this process until we obtain a set of non-intersecting trees that satisfies the two properties stated above. So, assume that all of the trees in $\mathcal{T}$ are non-intersecting. Note that if $k \ge 2$, then there exist neighboring vertices $v_a$ and $v_b$ that are contained in different trees in $\mathcal{T}$. Without loss of generality, we can relabel the vertices of $T$ and the trees in $\mathcal{T}$ such that $a = 1$, $b = 2$, $T_1$ contains $v_1$, and $T_2$ contains $v_2$. We let $m_1$ and $m_2$ be the number of vertices on $T_1$ and $T_2$, respectively. Since $\bm{s}^{(n)}$ forms a minimally $t$-dilated configuration on both $T_1$ and $T_2$, $\bm{s}^{(n)}$ has exactly $t(m_1 - 1)$ chips on $v_1$ and $t(m_2 - 1)$ chips on $v_2$. We now let $T^*$ be the tree formed by connecting $T_1$ and $T_2$ by the edge between $v_1$ and $v_2$. Then $\bm{s}^{(n)}$ has exactly $t(m_1 + m_2 - 2)$ chips on $T^*$. But $T^*$ has $m_1 + m_2$ vertices, which contradicts the assumption that $\bm{s}^{(n)}$ is $t$-dilated on $T$.   
\end{proof}

The following lemma ensures that one can always remove a chip from a non-minimally $t$-dilated configuration so that the resulting configuration remains $t$-dilated. 

\begin{lemma}\label{subtractoffchips}
    Let $T$ be an $n$-vertex tree, and let $\bm{s}^{(n)}$ be a non-minimally $t$-dilated configuration on $T$ for some $t \in \mathbb{N}_1$. Then there exists $i \in \mathbb{N}_{[1,n]}$ such that $\bm{s}^{(n)} - \bm{e}_i^{(n)}$ is a $t$-dilated configuration on $T$.
\end{lemma}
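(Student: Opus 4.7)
The plan is to pick a vertex $v_i$ that belongs to no minimally $t$-dilated subtree of $T$ (with respect to $\bm{s}^{(n)}$), and then argue that removing a chip from this vertex cannot cause any subtree's chip count to drop below the $t$-dilated threshold. The existence of such a vertex will follow directly from Lemma \ref{Deduce t-dilated} applied contrapositively.

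More precisely, first I would let $\mathcal{T}$ denote the (finite) collection of all subtrees $T^* \subseteq T$ on which $\bm{s}^{(n)}$ forms a minimally $t$-dilated configuration. Suppose for contradiction that every vertex of $T$ lies in at least one member of $\mathcal{T}$. Then $\mathcal{T}$ satisfies both hypotheses of Lemma \ref{Deduce t-dilated}, so $\bm{s}^{(n)}$ is minimally $t$-dilated on $T$, contradicting the hypothesis that $\bm{s}^{(n)}$ is non-minimally $t$-dilated. Hence there exists some vertex $v_i \in V(T)$ lying in no minimally $t$-dilated subtree.

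Next, I would observe that $v_i$ must carry at least one chip. Indeed, the single-vertex subtree $\{v_i\}$ requires at least $t(1-1) = 0$ chips to be $t$-dilated, and if $\bm{e}_i^{(n)} \cdot \bm{s}^{(n)} = 0$, then $\{v_i\}$ would itself be minimally $t$-dilated, contrary to the choice of $v_i$. Thus $\bm{s}^{(n)} - \bm{e}_i^{(n)}$ is a bona fide chip configuration (nonnegative entries).

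Finally, I would verify that $\bm{s}^{(n)} - \bm{e}_i^{(n)}$ remains $t$-dilated. For any $m$-vertex subtree $T^*$ of $T$, if $v_i \notin V(T^*)$ then the chip count on $T^*$ is unchanged and still at least $t(m-1)$. If $v_i \in V(T^*)$, then $T^* \notin \mathcal{T}$ by choice of $v_i$, so $\bm{s}^{(n)}$ has strictly more than $t(m-1)$ chips on $T^*$; hence $\bm{s}^{(n)} - \bm{e}_i^{(n)}$ still has at least $t(m-1)$ chips on $T^*$. This completes the verification. I do not foresee a genuine obstacle here: the only subtle point is ruling out that the chosen $v_i$ has zero chips, and that is immediate from considering the singleton subtree, so the argument amounts to assembling the three pieces above.
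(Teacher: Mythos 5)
Your proposal is correct and is essentially the paper's argument viewed from the other direction: the paper proves the contrapositive (if no chip can be removed, then every vertex lies in a tight subtree, so Lemma \ref{Deduce t-dilated} forces $\bm{s}^{(n)}$ to be minimally $t$-dilated), whereas you apply Lemma \ref{Deduce t-dilated} contrapositively to produce a vertex in no tight subtree and remove a chip there. Your explicit check via the singleton subtree that the chosen vertex carries a chip is a nice touch that the paper leaves implicit.
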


\begin{proof}
  Proceeding by contrapositive, we show that if $\bm{s}^{(n)}$ is $t$-dilated but that for all $i \in \mathbb{N}_1^n$, $\bm{s}^{(n)} - \bm{e}_i^{(n)}$ is not $t$-dilated, then $\bm{s}^{(n)}$ must be minimally $t$-dilated. Note that if $\bm{s}^{(n)}$ is $t$-dilated on $T$ but $\bm{s}^{(n)} - \bm{e}_i^{(n)}$ is not $t$-dilated on $T$, then there must exist an $m$-vertex subtree $T^*$ of $T$ containing $v_i$ such that $\bm{s}^{(n)}$ has exactly $t(m - 1)$ chips on $T^*$. Since such a subtree must exist for each $i \in \mathbb{N}_1^n$, we can deduce the existence of a set $\mathcal{T}$ of subtrees of $T$ that satisfy the conditions stated in Lemma \ref{Deduce t-dilated} and thus we conclude that $\bm{s}^{(n)}$ must be minimally $t$-dilated on $T$.

\end{proof}




The following lemma shows that one can move from a $(t+1)$-dilated configuration to a $t$-dialted configuration via subtraction of a minimally self-reachable configuration.

\begin{lemma}\label{theorem6.8}
    Let $T$ be an $n$-vertex tree, and let $\bm{c}^{(n)}$ be a $(t + 1)$-dilated configuration on $T$ for some $t \in \mathbb{N}_1$. Then there exists a minimally self-reachable configuration $\bm{s}^{(n)}$ on $T$ such that $\bm{c}^{(n)} - \bm{s}^{(n)}$ is $t$-dilated on $T$.
\end{lemma}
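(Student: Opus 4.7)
The plan is to induct on $n$, choosing a leaf $v_n$ adjacent to $v_{n-1}$ and splitting into two cases depending on the size of $c_n$, then applying Corollary~\ref{AddChipToLeafOrNeighbor} to lift a minimally self-reachable configuration from $T\setminus v_n$ up to $T$. The base case $n=1$ is vacuous (take $\bm{s}^{(1)}=\bm{0}$). For the inductive step, write $\bm{c}^{(n-1)}=(c_1,\ldots,c_{n-1})$ for the restriction of $\bm{c}^{(n)}$ to $T\setminus v_n$.

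In the first case, suppose $c_n\geq t+1$. Any $m$-vertex subtree of $T\setminus v_n$ is also a subtree of $T$ carrying the same chip count, so $\bm{c}^{(n-1)}$ is immediately $(t+1)$-dilated on $T\setminus v_n$. The inductive hypothesis produces a minimally self-reachable $\bm{s}^{(n-1)}$ on $T\setminus v_n$ with $\bm{c}^{(n-1)}-\bm{s}^{(n-1)}$ being $t$-dilated on $T\setminus v_n$. Setting $\bm{s}^{(n)}=(\bm{s}^{(n-1)},1)$ yields a minimally self-reachable configuration on $T$ by Corollary~\ref{AddChipToLeafOrNeighbor}(3). The $t$-dilation of $\bm{c}^{(n)}-\bm{s}^{(n)}$ on subtrees avoiding $v_n$ is inherited from the inductive hypothesis, and for an $m$-vertex subtree $T^*$ containing $v_n$ the inequality reduces to $t(m-2)+(c_n-1)\geq t(m-1)$, which holds since $c_n-1\geq t$.

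The second case, $c_n\leq t$, is the main obstacle and requires a more delicate reduction. I would define
\[
\tilde{\bm{c}}^{(n-1)} \coloneq \bm{c}^{(n-1)} - (t+1-c_n)\bm{e}_{n-1}^{(n-1)},
\]
where the coefficient $t+1-c_n$ is chosen precisely to record how much dilation slack at $v_{n-1}$ is ``owed'' to subtrees passing through $v_n$. Non-negativity of the $(n-1)$st coordinate follows from $(t+1)$-dilation of $\bm{c}^{(n)}$ on the subtree $\{v_{n-1},v_n\}$. For any $m$-vertex subtree $T^*\subseteq T\setminus v_n$ containing $v_{n-1}$, applying $(t+1)$-dilation of $\bm{c}^{(n)}$ to $T^*\cup\{v_n\}$ gives $\bm{c}^{(n-1)}|_{T^*}\geq (t+1)m-c_n$, which yields exactly $\tilde{\bm{c}}^{(n-1)}|_{T^*}\geq (t+1)(m-1)$; for subtrees disjoint from $v_{n-1}$ the $(t+1)$-dilation inequality is inherited unchanged. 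Thus $\tilde{\bm{c}}^{(n-1)}$ is $(t+1)$-dilated on $T\setminus v_n$, and the inductive hypothesis supplies $\bm{s}^{(n-1)}$ minimally self-reachable on $T\setminus v_n$ with $\tilde{\bm{c}}^{(n-1)}-\bm{s}^{(n-1)}$ being $t$-dilated. Taking $\bm{s}^{(n)}=(\bm{s}^{(n-1)}+\bm{e}_{n-1}^{(n-1)},0)$ lifts to a minimally self-reachable configuration on $T$ by Corollary~\ref{AddChipToLeafOrNeighbor}(2).

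The final verification that $\bm{c}^{(n)}-\bm{s}^{(n)}=(\bm{c}^{(n-1)}-\bm{s}^{(n-1)}-\bm{e}_{n-1}^{(n-1)},c_n)$ is $t$-dilated on $T$ is then a routine four-way case split on subtrees of $T$: those disjoint from both $v_{n-1}$ and $v_n$ inherit the bound from $\tilde{\bm{c}}^{(n-1)}-\bm{s}^{(n-1)}$; subtrees containing $v_{n-1}$ but not $v_n$ gain an extra summand $(t-c_n)\geq 0$; the singleton $\{v_n\}$ is trivial; and for a subtree $T^{**}\cup\{v_n\}$ with $v_{n-1}\in T^{**}$ the $-(t+1-c_n)$ shift at $v_{n-1}$ exactly cancels the additional $c_n$ at $v_n$, leaving slack $t$ on top of the inductive bound. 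The conceptual heart of the argument is identifying the correct modification $\tilde{\bm{c}}^{(n-1)}$ in Case 2, which is forced by exactly the amount by which $(t+1)$-dilation on subtrees through $v_n$ over-constrains $c_{n-1}$ relative to the naive restriction.
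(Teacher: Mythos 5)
Your proposal is correct and follows essentially the same route as the paper's proof: induction on $n$ via a leaf $v_n$, the same case split on $c_n$ versus $t+1$, the same modified configuration $\bm{c}^{(n-1)}-(t+1-c_n)\bm{e}_{n-1}^{(n-1)}$ in the second case, and the same use of Corollary~\ref{AddChipToLeafOrNeighbor} to lift the minimally self-reachable configuration. The only difference is cosmetic: you verify the dilation inequalities directly, whereas the paper argues by contradiction.
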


\begin{proof}
    We proceed by induction on $n$. 
    For the base case of $n=1$, note that the chip configuration with 0 chips on $v_1$ is a minimally self-reachable configuration. 
	Additionally, note that if $\bm{c}^{(1)}$ is $(t + 1)$-dilated, then $\bm{c}^{(1)}$ is $t$-dilated. Thus, $\bm{c}^{(1)} - \bm{0}^{(1)}$ is a $t$-dilated configuration on $T$. 
	Now for $k\geq 1$, assume that for any $k$-vertex tree $T$ and any $(t + 1)$-dilated configuration $\bm{c}^{(k)}$ on $T$, there exists a minimally self-reachable configuration $\bm{s}^{(k)}$ on $T$ such that $\bm{c}^{(k)} - \bm{s}^{(k)}$ is $t$-dilated on $T$. We now let $T$ be a tree with $k + 1$ vertices such that $v_{k + 1}$ is a leaf connected to $v_k$. We let $\left(\bm{c}^{(k)}, c_{k + 1}\right)$ be a $(t + 1)$-dilated chip configuration on $T$ and consider the following cases:
    
 \textbf{Case 1:} $c_{k + 1} \ge t + 1$. Note that $\bm{c}^{(k)}$ must be $(t + 1)$-dilated on $T \setminus v_{k + 1}$, so there exists a minimally self-reachable configuration $\bm{s}^{(k)}$ on $T \setminus v_{k + 1}$ such that $\bm{c}^{(k)} - \bm{s}^{(k)}$ is $t$-dilated on $T \setminus v_{k + 1}$. Then $\left(\bm{s}^{(k)}, 1\right)$ is a minimally self-reachable configuration on $T$ by Corollary \ref{AddChipToLeafOrNeighbor}. Now suppose towards contradiction that $\left(\bm{c}^{(k)}, c_{k + 1}\right) - \left(\bm{s}^{(k)}, 1\right)$ is not $t$-dilated on $T$. Then there exists an $m$-vertex subtree $T^*$ of $T$ such that $\left(\bm{c}^{(k)}, c_{k + 1}\right) - \left(\bm{s}^{(k)}, 1\right)$ has fewer than $t(m - 1)$ chips on $T^*$. $T^*$ must include $v_{k + 1}$ since $\bm{c}^{(k)} - \bm{s}^{(k)}$ is $t$-dilated on $T \setminus v_{k + 1}$. Also note that since $c_{k + 1} - 1 \ge t$, the configuration $\left(\bm{c}^{(k)}, c_{k + 1}\right) - \left(\bm{s}^{(k)}, 1\right)$ must have at least $t$ chips on $v_{k + 1}$. This implies that $\left(\bm{c}^{(k)}, c_{k + 1}\right) - \left(\bm{s}^{(k)}, 1\right)$ must have fewer than $t(m - 1) - t = t(m - 2)$ chips on the $(m - 1)$-vertex tree $T^* \setminus v_{k + 1}$. But this contradicts our assumption that $\bm{c}^{(k)} - \bm{s}^{(k)}$ is $t$-dilated on $T \setminus v_{k + 1}$. Thus, $\left(\bm{c}^{(k)}, c_{k + 1}\right) - \left(\bm{s}^{(k)}, 1\right)$ is $t$-dilated on $T$.

\textbf{Case 2:} $c_{k + 1} \le t$. Suppose towards contradiction that $\bm{c}^{(k)} - (t + 1 - c_{k + 1})\bm{e}_k^{(k)}$ is not $(t + 1)$-dilated on $T \setminus v_{k + 1}$. Then for some $m$-vertex subtree $T^*$ of $T \setminus v_{k + 1}$ that contains $v_k$, $\bm{c}^{(k)} - (t + 1 - c_{k + 1})\bm{e}_k^{(k)}$ has fewer than $(t + 1)(m - 1)$ chips on $T^*$. Therefore, $\bm{c}^{(k)}$ has fewer than $(t + 1)(m - 1) + t + 1 - c_{k + 1} = (t + 1)m - c_{k + 1}$ chips on $T^*$. If we let $T^{**}$ be the subtree of $T$ obtained by attaching $v_{k + 1}$ to $T^*$ at $v_k$, then $\left(\bm{c}^{(k)}, c_{k + 1}\right)$ has fewer than $(t + 1)m$ chips on $T^{**}$, which contradicts our assumption that $\left(\bm{c}^{(k)}, c_{k + 1}\right)$ is $t$-dilated on $T$. Therefore, $\bm{c}^{(k)} - (t + 1 - c_{k + 1})\bm{e}_k^{(k)}$ must be $(t + 1)$-dilated on $T \setminus v_{k + 1}$. Hence, there exists a minimally self-reachable configuration $\bm{s}^{(k)}$ on $T \setminus v_{k + 1}$ such that $\bm{c}^{(k)} - (t + 1 - c_{k + 1})\bm{e}_k^{(k)} - \bm{s}^{(k)}$ is $t$-dilated on $T \setminus v_{k + 1}$. Then $\left(\bm{s}^{(k)} + \bm{e}_k^{(k)}, 0\right)$ is a minimally self-reachable configuration on $T$ by Corollary \ref{AddChipToLeafOrNeighbor}. We suppose towards contradiction that
        \begin{equation}
            \left(\bm{c}^{(k)}, c_{k + 1}\right) - \left(\bm{s}^{(k)} + \bm{e}_k^{(k)}, 0\right)\notag
        \end{equation}
        is not $t$-dilated on $T$. Then for some $m$-vertex subtree $T^*$ of $T$, $\left(\bm{c}^{(k)}, c_{k + 1}\right) - \left(\bm{s}^{(k)} + \bm{e}_k^{(k)}, 0\right)$ has fewer than $t(m - 1)$ chips. Note that since $c_{k + 1} \le t$, $\bm{c}^{(k)} - (t + 1 - c_{k + 1})\bm{e}_k^{(k)} - \bm{s}^{(k)}$ has at most as many chips on $v_k$ as $\bm{c}^{(k)} - \bm{s}^{(k)} - \bm{e}_k^{(k)}$. Therefore, $T^*$ must contain $v_{k + 1}$ or else $\bm{c}^{(k)} - (t + 1 - c_{k + 1})\bm{e}_k^{(k)} - \bm{s}^{(k)}$ couldn't be $t$-dilated on $T \setminus v_{k + 1}$. Now note that $\left(\bm{c}^{(k)}, c_{k + 1}\right) - \left(\bm{s}^{(k)} + \bm{e}_k^{(k)}, 0\right)$ contains fewer than $t(m - 1) - c_{k + 1}$ chips on $T^* \setminus v_{k + 1}$. So $\bm{c}^{(k)} - (t + 1 - c_{k + 1})\bm{e}_k^{(k)} - \bm{s}^{(k)}$ must contain fewer than $t(m - 1) - t = t(m - 2)$ chips on $T^* \setminus v_{k + 1}$. But since $T^* \setminus v_{k + 1}$ has $m - 1$ vertices, this contradicts the fact that $\bm{c}^{(k)} - (t + 1 - c_{k + 1})\bm{e}_k^{(k)} - \bm{s}^{(k)}$ is $t$-dilated on $T \setminus v_{k + 1}$. Thus, $\left(\bm{c}^{(k)}, c_{k + 1}\right) - \left(\bm{s}^{(k)} + \bm{e}_k^{(k)}, 0\right)$ is $t$-dilated on $T$.
        
    This completes the proof.
\end{proof}

The following technical lemma descends from Lemma \ref{subtractoffchips} and Lemma \ref{theorem6.8}.

\begin{lemma}\label{cor6.9}
    Let $T$ be an $n$-vertex tree, and let $\bm{c}^{(n)}$ be a $(t + 1)$-dilated self-reachable configuration with $\ell$ chips on $T$ for some $t \in \mathbb{N}_1$. 
	Then for any $n-1\leq L \leq \ell - t(n - 1)$, there exists a self-reachable configuration $\bm{s}^{(n)}$ on $T$ with $L$ chips such that $\bm{c}^{(n)} - \bm{s}^{(n)}$ is $t$-dilated on $T$.   
\end{lemma}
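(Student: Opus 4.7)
The plan is to build the desired self-reachable configuration $\bm{s}^{(n)}$ by starting from a minimally self-reachable configuration and adding chips one at a time, carefully chosen so that the complement $\bm{c}^{(n)}-\bm{s}^{(n)}$ stays $t$-dilated throughout.

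For the base case $L=n-1$, I would apply Lemma \ref{theorem6.8} directly: since $\bm{c}^{(n)}$ is $(t+1)$-dilated on $T$, there exists a minimally self-reachable configuration $\bm{\mu}^{(n)}$ on $T$ such that $\bm{c}^{(n)}-\bm{\mu}^{(n)}$ is $t$-dilated. Take $\bm{s}^{(n)}=\bm{\mu}^{(n)}$, which has exactly $n-1$ chips as required. For $L>n-1$, I would proceed by induction on $L$, maintaining the invariant that we have a self-reachable configuration $\bm{s}^{(n)}$ with $L$ chips such that $\bm{c}^{(n)}-\bm{s}^{(n)}$ is $t$-dilated on $T$.

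For the inductive step, suppose we have such a configuration $\bm{s}^{(n)}$ with $L$ chips where $L<\ell-t(n-1)$. The complement $\bm{c}^{(n)}-\bm{s}^{(n)}$ has $\ell-L>t(n-1)$ chips; since a minimally $t$-dilated configuration on $T$ must have exactly $t(n-1)$ chips, the complement is $t$-dilated but not minimally so. Lemma \ref{subtractoffchips} then provides an index $i$ such that $\bm{c}^{(n)}-\bm{s}^{(n)}-\bm{e}_i^{(n)}$ is still $t$-dilated on $T$. Setting $\bm{s}'^{(n)}=\bm{s}^{(n)}+\bm{e}_i^{(n)}$ gives a configuration with $L+1$ chips; it remains self-reachable by Proposition \ref{Add Chips to SRC} (which allows us to freely add chips to any vertex of a self-reachable configuration), and by construction $\bm{c}^{(n)}-\bm{s}'^{(n)}$ is $t$-dilated. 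Iterating this procedure until we reach the desired chip count $L$ completes the argument.

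The only subtle point, which is really the heart of the argument, is verifying that the process can always be continued so long as $L<\ell-t(n-1)$. This hinges on the simple chip-count observation that a $t$-dilated configuration with strictly more than $t(n-1)$ chips cannot be minimally $t$-dilated, which is precisely what triggers the applicability of Lemma \ref{subtractoffchips}. Once this is in place, the rest of the induction is mechanical, since Proposition \ref{Add Chips to SRC} guarantees self-reachability is preserved under chip additions without any additional bookkeeping.
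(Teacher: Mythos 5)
Your proposal is correct and follows essentially the same route as the paper: start from the minimally self-reachable configuration supplied by Lemma \ref{theorem6.8}, then repeatedly invoke Lemma \ref{subtractoffchips} on the complement (which stays non-minimally $t$-dilated as long as it has more than $t(n-1)$ chips, i.e.\ as long as $L < \ell - t(n-1)$), and use Proposition \ref{Add Chips to SRC} to keep $\bm{s}^{(n)}$ self-reachable. Your explicit induction on $L$ is just a cleaner packaging of the paper's ``remove $L-(n-1)$ more chips'' iteration.
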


\begin{proof}
    We let $L$ be an arbitrary integer between $n - 1$ and $\ell - t(n - 1)$. By Lemma \ref{theorem6.8}, there exists a minimally self-reachable configuration $\bm{s}^{(n)}$ such that $\bm{c}^{(n)} - \bm{s}^{(n)}$ is $t$-dilated on $T$. Since $\bm{c}^{(n)}$ has $\ell \ge (t + 1)(n - 1)$ chips on $T$, it will always be possible to remove $L$ chips from the configuration $\bm{c}^{(n)}$ while keeping at least $t(n - 1)$ chips on $T$. So since $\bm{s}^{(n)}$ must have $n - 1$ chips on $T$, we know that we can remove up to $L - (n - 1)$ more chips from the configuration $\bm{c}^{(n)} - \bm{s}^{(n)}$ while keeping at least $t(n - 1)$ chips on $T$. Therefore, by Lemma \ref{subtractoffchips}, we can remove $L - (n - 1)$ more chips from the configuration $\bm{c}^{(n)} - \bm{s}^{(n)}$ while maintaining a $t$-dilated configuration on $T$. We let $\bm{x}^{(n)}$ be the number of chips removed from each vertex starting from the configuration $\bm{c}^{(n)} - \bm{s}^{(n)}$. Then $\bm{s}^{(n)} + \bm{x}^{(n)}$ is a self-reachable configuration on $T$ with $L$ chips by Proposition \ref{Add Chips to SRC} since all of the components of $\bm{x}^{(n)}$ are nonnegative, and we have that $\bm{c}^{(n)} - \left(\bm{s}^{(n)} + \bm{x}^{(n)}\right)$ is $t$-dilated on $T$, as desired.
    %
    %
\end{proof}


We have now built the necessary machinery to prove Theorem \ref{thm:IDP}.

\begin{proof}[Proof of Theorem~\ref{thm:IDP}]
	To show that $\CP{\ell}{T}$ has the integer decomposition property, we first show that for $t\in\mathbb{N}_1$, $t\cdot \CP{\ell}{T}$ is the convex hull of all $t$-dilated configurations with $t\ell$ chips. 
    To do so, it suffices to show that the lattice points of $t \cdot \CP{\ell}{T}$ are precisely the $t$-dilated configurations on $T$ with $t\ell$ chips. 
    Lemma \ref{chipconfigdilation} implies that all of the lattice points of $t\cdot \CP{\ell}{T}$ are $t$-dilated configurations on $T$. 
    Furthermore, these configurations must have $t\ell$ chips since all of the configurations in $S_\ell^{(T)}$ have $\ell$ chips. 
    Therefore, we only need to show that all of the $t$-dilated configurations with $t\ell$ chips belong to $t \cdot \CP{\ell}{T}$. 
    This is immediate for $t=1$, so consider $t\geq 2$.
    Given a $t$-dilated chip configuration $\bm{c}^{(n)}$ with $t\ell$ chips for $t \in \mathbb{N}_2$, note that $n - 1 \le \ell \le t\ell - (t - 1)(n - 1)$ since $\ell \ge n - 1$. 
     We can then apply Lemma \ref{cor6.9} to obtain a self-reachable configuration $\bm{s}_1^{(n)}$ on $T$ with $\ell$ chips such that $\bm{c}^{(n)} - \bm{s}_1^{(n)}$ is $(t - 1)$-dilated on $T$. 
    Then if $t \ge 3$, then $n - 1 \le \ell \le (t - 1)\ell - (t - 2)(n - 1)$, so we can apply Lemma \ref{cor6.9} again to deduce the existence of a self-reachable configuration $\bm{s}_2^{(n)}$ on $T$ with $\ell$ chips such that $\bm{c}^{(n)} - \bm{s}_1^{(n)} - \bm{s}_2^{(n)}$ is $(t - 2)$-dilated on $T$. 
    Iteratively applying Lemma \ref{cor6.9} $t - 1$ times deduces the existence of self-reachable configurations $\bm{s}_1^{(n)}, \bm{s}_2^{(n)}, \ldots, \bm{s}_{t - 1}^{(n)}$ on $T$ with $\ell$ chips such that
    \begin{align*}
        \bm{c}^{(n)} - \sum_{j = 1}^{t - 1}\bm{s}_j^{(n)}
    \end{align*}
    is 1-dilated, i.e. self-reachable, on $T$. This configuration must have $\ell$ chips on $T$ since $\bm{c}^{(n)}$ has $t\ell$ chips on $T$. If we let
    \begin{align*}
        \bm{s}_t^{(n)} = \bm{c}^{(n)} - \sum_{j = 1}^{t - 1}\bm{s}_j^{(n)},
    \end{align*}
    we have that
    \begin{align}\label{eq31}
        \bm{c}^{(n)} = \sum_{j = 1}^{t}\bm{s}_j^{(n)}.
    \end{align}
    where $\bm{s}_j^{(n)}$ is a self-reachable configuration with $\ell$ chips for all $j \in \mathbb{N}_{[1,t]}$, meaning $\bm{s}_j^{(n)} \in  \CP{\ell}{T}$ for all $j \in \mathbb{N}_{[1,t]}$. Now note that we can recast \eqref{eq31} as
    \begin{equation}\label{eq32}
        \bm{c}^{(n)} = \sum_{j = 1}^{t}\frac{1}{t}\left(t\bm{s}_j^{(n)}\right).
    \end{equation}
    Since \eqref{eq32} writes $\bm{c}^{(n)}$ as a convex combination of $t\bm{s}_j^{(n)} \in t\cdot \CP{\ell}{T}$, we have that $\bm{c}^{(n)}$ is a lattice point of $t\cdot  \CP{\ell}{T}$, as desired. Therefore, the lattice points of $t\cdot  \CP{\ell}{T}$  are precisely the $t$-dilated configurations on $T$ with $t\ell$ chips.
    Furthermore, \eqref{eq31} shows that we can express any such lattice point as a sum of $t$ lattice points in $S_\ell^{(T)}$ proving that $ \CP{\ell}{T}$ has the integer decomposition property.
\end{proof}



\section{Vertex Structure of Self-Reachable Configuration Polytopes}\label{sec:vertices}

In this section, we explore the vertex structure of $\CP{\ell}{T}$ in the case where $\ell\geq n$. 
It is clear from Corollary \ref{SRC Convex Combination Closure} that every lattice point within $\CP{\ell}{T}$ is a self-reachable configuration on $T$, but what is unclear as of yet is which configurations form the extreme points. 
In order to answer this question, we need a definition.

\begin{definition}
    Let $T$ be an $n$-vertex tree, and let $\bm{\nu}^{(n)}$ be a non-minimally self-reachable configuration on $T$. We say that $\bm{\nu}^{(n)}$ is a \Def{near-minimally self-reachable configuration} on $T$ if there exists a unique $i \in \mathbb{N}_{[1,n]}$ such that $\bm{\nu}^{(n)} - \bm{e}_i^{(n)}$ is also self-reachable on $T$.
     For this $i \in \mathbb{N}_{[1,n]}$, we say that $\bm{\nu}^{(n)}$ is \Def{near-minimally self-reachable about $v_i$}.
\end{definition}

\begin{remark}
It is worth noting that the existence of such an $i$ follows from the $t = 1$ case of Lemma \ref{subtractoffchips}.
\end{remark}

With this definition at hand, we can now describe the vertex structure of $\CP{\ell}{T}$ via the following theorem. 

\begin{theorem}\label{thm:vertexStructure}
    Let $T$ be an $n$-vertex tree, and let $\ell \in \mathbb{N}_n$. The vertices of $\CP{\ell}{T}$ are precisely the near-minimally self-reachable configurations on $T$ with $\ell$ chips. 
\end{theorem}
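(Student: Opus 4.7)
My plan is to prove the two containments of the claimed equality separately: first, any self-reachable configuration with $\ell$ chips that is not near-minimally self-reachable is not a vertex; second, every near-minimally self-reachable configuration is a vertex of $\CP{\ell}{T}$.

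For the first containment, suppose $\bm{s}^{(n)}\in S_\ell^{(T)}$ is not near-minimally self-reachable. Since $\ell\geq n$, $\bm{s}^{(n)}$ is non-minimally self-reachable, so Lemma~\ref{subtractoffchips} with $t=1$ yields some index $i$ for which $\bm{s}^{(n)}-\bm{e}_i^{(n)}$ is self-reachable; failure of near-minimality supplies a second such index $i'\neq i$. By Proposition~\ref{Add Chips to SRC}, the configurations $\bm{s}^{(n)}-\bm{e}_i^{(n)}+\bm{e}_{i'}^{(n)}$ and $\bm{s}^{(n)}+\bm{e}_i^{(n)}-\bm{e}_{i'}^{(n)}$ are distinct self-reachable configurations with $\ell$ chips (each dominates one of $\bm{s}^{(n)}-\bm{e}_i^{(n)}$, $\bm{s}^{(n)}-\bm{e}_{i'}^{(n)}$ componentwise), and their midpoint is $\bm{s}^{(n)}$, exhibiting $\bm{s}^{(n)}$ as a non-extreme point of $\CP{\ell}{T}$.

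For the reverse containment, fix a near-minimally self-reachable $\bm{\nu}^{(n)}$ about $v_i$ and suppose $\bm{\nu}^{(n)}=\sum_k\lambda_k\bm{s}_k^{(n)}$ with $\lambda_k>0$ and each $\bm{s}_k^{(n)}\in S_\ell^{(T)}$; the goal is to conclude $\bm{s}_k^{(n)}=\bm{\nu}^{(n)}$ for every $k$. Call a subtree $T^*$ of $T$ \emph{tight} (for $\bm{\nu}^{(n)}$) if $\bm{\nu}^{(n)}$ has exactly $|V(T^*)|-1$ chips on $T^*$. The near-minimality hypothesis yields two structural facts: since $\bm{\nu}^{(n)}-\bm{e}_i^{(n)}$ is self-reachable, no tight subtree contains $v_i$; while since $\bm{\nu}^{(n)}-\bm{e}_j^{(n)}$ fails to be self-reachable for each $j\neq i$, every vertex $v_j$ with $j\neq i$ lies in some tight subtree (either the singleton $\{v_j\}$ when $\nu_j=0$, or a tight subtree on at least two vertices when $\nu_j\geq 1$). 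Letting $C_1,\ldots,C_d$ denote the connected components of $T\setminus v_i$, the first fact forces every tight subtree to lie inside some $C_r$, while the second ensures that the tight subtrees lying in $C_r$ cover $V(C_r)$; Lemma~\ref{Deduce t-dilated} then upgrades $\bm{\nu}^{(n)}|_{C_r}$ to a minimally self-reachable configuration on $C_r$, so $C_r$ itself is a tight subtree of $\bm{\nu}^{(n)}$.

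With the components $C_r$ identified as tight subtrees, the remainder of the argument proceeds componentwise. The identity $|C_r|-1=\bm{\chi}_{C_r}^{(n)}\cdot\bm{\nu}^{(n)}=\sum_k\lambda_k\bigl(\bm{\chi}_{C_r}^{(n)}\cdot\bm{s}_k^{(n)}\bigr)$, combined with the self-reachability bound $\bm{\chi}_{C_r}^{(n)}\cdot\bm{s}_k^{(n)}\geq|C_r|-1$ from Theorem~\ref{srcsubtreeresult}, forces each $\bm{s}_k^{(n)}|_{C_r}$ to be a minimally self-reachable configuration on $C_r$, hence a lattice point of $\CP{|C_r|-1}{C_r}$. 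By Theorem~\ref{cubeunimodularequivalence} this polytope is a unit cube, so its lattice points are all vertices, and the restricted identity $\bm{\nu}^{(n)}|_{C_r}=\sum_k\lambda_k\bm{s}_k^{(n)}|_{C_r}$ therefore collapses to $\bm{s}_k^{(n)}|_{C_r}=\bm{\nu}^{(n)}|_{C_r}$ for every $k$. Conservation of total chips pins down the remaining coordinate $s_{k,i}=\nu_i$, yielding $\bm{s}_k^{(n)}=\bm{\nu}^{(n)}$ as desired. The main obstacle I anticipate is the tight-covering step in the previous paragraph: one must carefully extract, from the non-self-reachability of each $\bm{\nu}^{(n)}-\bm{e}_j^{(n)}$, a tight subtree in $C_r$ containing $v_j$ so that Lemma~\ref{Deduce t-dilated} can be invoked to conclude that $C_r$ itself is tight.
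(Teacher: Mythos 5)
Your proof is correct, and one half of it takes a genuinely different route from the paper. The containment ``near-minimal $\Rightarrow$ vertex'' is essentially the paper's argument (Lemma \ref{Near-Minimal Configurations are Vertices}): your tightness analysis re-derives condition (1) of Lemma \ref{Near-Minimal Properties Lemma} via Lemma \ref{Deduce t-dilated}, and the componentwise collapse using the $0/1$-cube structure from Theorem \ref{cubeunimodularequivalence} is exactly how the paper trivializes the convex combination; the paper merely works in the opposite order, first pinning the coordinate at $v_i$ via the maximality statement of Lemma \ref{Max Chips Near-Minimal} and then deducing minimality on the components of $T\setminus v_i$, where you deduce minimality on the components first and recover the coordinate at $v_i$ by chip conservation. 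The other containment is where you diverge, and your argument is substantially shorter: combining Lemma \ref{subtractoffchips} (for existence of one decrementable coordinate) with the failure of uniqueness to get two indices $i\neq i'$, and then exhibiting $\bm{s}^{(n)}$ as the midpoint of $\bm{s}^{(n)}-\bm{e}_i^{(n)}+\bm{e}_{i'}^{(n)}$ and $\bm{s}^{(n)}+\bm{e}_i^{(n)}-\bm{e}_{i'}^{(n)}$ (both self-reachable with $\ell$ chips by Proposition \ref{Add Chips to SRC}, since they respectively dominate $\bm{s}^{(n)}-\bm{e}_i^{(n)}$ and $\bm{s}^{(n)}-\bm{e}_{i'}^{(n)}$), is a clean two-line certificate of non-extremality. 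This suffices because every vertex of $\CP{\ell}{T}=\conv\left(S_\ell^{(T)}\right)$ must lie in $S_\ell^{(T)}$. The paper instead proves the stronger Lemma \ref{SRC Polytope Vertex Lemma} --- every self-reachable configuration with $\ell$ chips is an explicit convex combination of near-minimal ones --- by induction on the number of vertices, which requires the machinery of Lemmas \ref{Add Chips to Near-Minimal Configuration}, \ref{Add Chips Near-Minimal Leaf}, and \ref{Move Chips to Leaf} and the two-term decompositions \eqref{eq8.3}--\eqref{eq8.4}. What your shortcut does not provide is that constructive decomposition into near-minimal configurations, but that is not needed for the theorem as stated.
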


Before we can produce a proof of this main result, it is necessary to explore properties of near-minimally self-reachable configurations via some lemmata. 
To this end, we begin with following result characterizing such configurations. 

\begin{lemma}\label{Near-Minimal Properties Lemma}
    Let $T$ be an $n$-vertex tree, and let $\bm{\nu}^{(n)}$ be a self-reachable configuration on $T$. Then $\bm{\nu}^{(n)}$ is near-minimally self-reachable about $v_t$ if and only if:

    \begin{enumerate}
        \item $\bm{\nu}^{(n)}$ forms a minimally self-reachable configuration on each tree in $T \setminus v_t$, and

        \item $\bm{\nu}^{(n)}$ has at least $d + 1$ chips on $v_t$, where $d = \deg(v_t)$.
    \end{enumerate}
\end{lemma}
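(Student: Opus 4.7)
The plan is to prove both directions of the equivalence separately, leveraging the subtree characterization of self-reachability from Theorem \ref{srcsubtreeresult} and the machinery developed for $t$-dilated configurations (specialized to $t=1$).

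For the forward direction, suppose $\bm{\nu}^{(n)}$ is near-minimally self-reachable about $v_t$. For each $i \neq t$, the configuration $\bm{\nu}^{(n)} - \bm{e}_i^{(n)}$ fails to be self-reachable, so by Theorem \ref{srcsubtreeresult} there exists a subtree $T_i^\ast$ containing $v_i$ on which $\bm{\nu}^{(n)}$ has exactly $|V(T_i^\ast)|-1$ chips, i.e.\ is minimally self-reachable. Crucially, $T_i^\ast$ cannot contain $v_t$, since otherwise removing a chip from $v_t$ would also fail Theorem \ref{srcsubtreeresult}, contradicting that $\bm{\nu}^{(n)} - \bm{e}_t^{(n)}$ is self-reachable. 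Hence each $T_i^\ast$ lies inside some connected component $T_j$ of $T \setminus v_t$. Applying Lemma \ref{Deduce t-dilated} with $t=1$ to each component $T_j$ (using Corollary \ref{subtreeconfigsaresrcs} to ensure $\bm{\nu}^{(n)}$ is self-reachable there, and the subtrees $\{T_i^\ast : v_i \in T_j\}$ as the covering family), we conclude that $\bm{\nu}^{(n)}$ is minimally self-reachable on each component $T_j$, establishing (1). For (2), summing the chip counts over components gives exactly $(n-1)-d$ chips on $T \setminus v_t$; since $\bm{\nu}^{(n)}$ is non-minimally self-reachable it carries strictly more than $n-1$ chips in total, forcing $\nu_t \geq d+1$.

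For the reverse direction, assume (1) and (2). The total chip count is $\nu_t + \sum_j (m_j - 1) \geq (d+1) + (n-1-d) = n$, so $\bm{\nu}^{(n)}$ is non-minimally self-reachable. I next verify that $\bm{\nu}^{(n)} - \bm{e}_t^{(n)}$ is self-reachable: for any subtree $T^\ast$ containing $v_t$, letting $T^\ast$ intersect $k \leq d$ components of $T \setminus v_t$ in subtrees with $p_1,\ldots,p_k$ vertices, the chip count on $T^\ast$ is at least $\nu_t + \sum_i(p_i - 1) \geq (d+1) + (|V(T^\ast)|-1-k) \geq |V(T^\ast)|$, leaving $\geq |V(T^\ast)|-1$ chips after removing one from $v_t$. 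Subtrees avoiding $v_t$ are unaffected, so Theorem \ref{srcsubtreeresult} applies. For any $i \neq t$, the vertex $v_i$ lies in some component $T_j$, which has exactly $m_j - 1$ chips by minimality; removing a chip from $v_i$ drops the count on $T_j$ below $m_j - 1$, so $\bm{\nu}^{(n)} - \bm{e}_i^{(n)}$ is not self-reachable. This gives uniqueness and completes the proof.

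The main obstacle will be the step promoting minimal self-reachability from the little subtrees $T_i^\ast$ up to each full component $T_j$; the natural temptation is to iterate Lemma \ref{combineminimalsrcs}, but the $T_i^\ast$ need not pairwise intersect, so invoking Lemma \ref{Deduce t-dilated} as a single ``covering'' step is cleaner. The chip-counting for the reverse direction also requires care to track the inequality $k \leq d$ precisely so that the bound $\nu_t \geq d+1$ is used tightly.
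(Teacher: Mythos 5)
Your proof is correct and takes essentially the same route as the paper's: the forward direction extracts, for each $i \neq t$, a tight subtree avoiding $v_t$ and promotes minimality to each component of $T \setminus v_t$ via the $t=1$ case of Lemma \ref{Deduce t-dilated}, and the reverse direction is the same chip-counting argument on subtrees containing $v_t$ (your explicit tracking of $k \le d$ is just a slightly more careful version of the paper's sum over all $d$ components). No gaps to report.
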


\begin{proof}
    Without loss of generality, we suppose that $t = n$. 
    We let $T_1, T_2, \ldots, T_d$ be the trees in $T \setminus v_n$. 
    For sufficiency, assume that $\bm{\nu}^{(n)}$ is near-minimally self-reachable about $v_n$. 
    Let $T^*$ be an arbitrary tree in $T \setminus v_n$, and let $v_1, v_2, \ldots, v_m$ be the vertices of $T^*$. 
    Because $\bm{\nu}^{(n)} - \bm{e}_j^{(n)}$ cannot be self-reachable on $T$ for all $j \in \mathbb{N}_{[1,m]}$, there must exist a set $\mathscr{T} = \{T_1^*, T_2^*, \ldots, T_k^*\}$ of subtrees of $T$ such that

    \begin{itemize}
        \item Each vertex of $T^*$ is contained in $T_i^*$ for some $i \in \mathbb{N}_{[1,k]}$, and

        \item $\bm{\nu}^{(n)}$ forms a minimally self-reachable configuration on $T_i^*$ for all $i \in \mathbb{N}_{[1,k]}$.
    \end{itemize}

    Note that none of the subtrees of $T$ in $\mathscr{T}$ can include $v_n$ since $\bm{\nu}^{(n)} - \bm{e}_n^{(n)}$ is self-reachable on $T$. 
    Therefore, $\mathscr{T}$ is in fact a set of subtrees of $T^*$. 
    The $t = 1$ case of Lemma \ref{Deduce t-dilated} implies that $\bm{\nu}^{(n)}$ must form a minimally self-reachable configuration on $T^*$. 
    Since $T^*$ was arbitrarily selected, we conclude that $\bm{\nu}^{(n)}$ forms a minimally self-reachable configuration on each tree in $T \setminus v_i$. So if we let $m_1, m_2, \ldots, m_d$ be the number of vertices on the trees in $T \setminus v_n$, then $\bm{\nu}^{(n)}$ must have exactly
    \begin{align*}
        \sum_{j = 1}^{d}(m_j - 1) = n - 1 - d
    \end{align*}
    chips on these trees in total. Since $\bm{\nu}^{(n)}$ must have at least $n$ chips on $T$ to avoid being non-self-reachable or minimally self-reachable, $\bm{\nu}^{(n)}$ must have at least $d + 1$ chips on $v_n$. 
    
    To show necessity, note that property (1) guarantees that $\bm{\nu}^{(n)} - \bm{e}_j^{(n)}$ is not self-reachable for $j \in \mathbb{N}_{[1,n-1]}$. 
    Let $T^*$ be an arbitrary $m$-vertex subtree of $T$. We will show that $\bm{\nu}^{(n)} - \bm{e}_n^{(n)}$ has at least $m - 1$ chips on $T^*$. 
    Since $\bm{\nu}^{(n)}$ is self-reachable on $T$, the desired claim follows immediately unless $T^*$ contains $v_n$. In this case, for each $i \in \mathbb{N}_{[1,d]}$, we let $M_i$ be the number of vertices $T^*$ has on $T_i$.
     Note that because $\bm{\nu}^{(n)}$ is self-reachable on $T$, $\bm{\nu}^{(n)} - \bm{e}_n^{(n)}$ has at least $M_i - 1$ chips on the branch of $T^*$ that overlaps with $T_i$. 
    Also, $\bm{\nu}^{(n)} - \bm{e}_n^{(n)}$ has at least $d$ chips on $v_n$ by property (2) of $\bm{\nu}^{(n)}$. This means that $\bm{\nu}^{(n)} - \bm{e}_n^{(n)}$ has at least
    \begin{align*}
        d + \sum_{i = 1}^{d}(M_i - 1) = \sum_{i = 1}^{d}M_i = m - 1
    \end{align*}
    chips on $T^*$, as desired. This completes the proof.
\end{proof}

The following lemma provides another useful characterization of near-minimally self-reachable configurations. Specifically, this result specifies that there must be a vertex with the maximum possible chips to maintain self-reachability. 

\begin{lemma}\label{Max Chips Near-Minimal}
    Let $T$ be an $n$-vertex tree. For any vertex $v_i$ of $T$, any $\ell \in \mathbb{N}_n$, and any self-reachable configuration $\bm{\nu}^{(n)}$ on $T$ with $\ell$ chips, $\bm{\nu}^{(n)}$ is a near-minimally self-reachable configuration on $T$ about $v_t$ if and only if $\bm{\nu}^{(n)}$ has exactly $\ell + d + 1 - n$ of chips on $v_t$ (the maximum possible number given that $\bm{\nu}^{(n)}$ is self-reachable on $T$), where $d = \deg(v_t)$.
\end{lemma}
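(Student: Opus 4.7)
The plan is to deduce Lemma \ref{Max Chips Near-Minimal} directly from Lemma \ref{Near-Minimal Properties Lemma} via a simple chip-counting argument on the trees of $T\setminus v_t$. Let $T_1,T_2,\ldots,T_d$ denote the trees of $T\setminus v_t$, and let $m_j=|V(T_j)|$, so that $\sum_{j=1}^d m_j = n-1$. By Theorem \ref{srcsubtreeresult}, any self-reachable configuration on $T$ must place at least $m_j-1$ chips on each $T_j$, so the total number of chips on $T\setminus v_t$ is at least $\sum_{j=1}^d(m_j-1)=n-1-d$. Equivalently, the number of chips on $v_t$ is at most $\ell-(n-1-d)=\ell+d+1-n$, which justifies the parenthetical remark about the maximum.

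For the forward direction, assume $\bm{\nu}^{(n)}$ is near-minimally self-reachable about $v_t$. Property (1) of Lemma \ref{Near-Minimal Properties Lemma} says that $\bm{\nu}^{(n)}$ is minimally self-reachable on each $T_j$, so $T_j$ carries exactly $m_j-1$ chips. Summing gives $n-1-d$ chips on $T\setminus v_t$, and hence exactly $\ell+d+1-n$ chips on $v_t$, as desired.

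For the reverse direction, assume $\bm{\nu}^{(n)}$ has exactly $\ell+d+1-n$ chips on $v_t$. Then the total chip count on $T\setminus v_t$ is $n-1-d=\sum_{j=1}^d(m_j-1)$. Since each $T_j$ already requires at least $m_j-1$ chips by Theorem \ref{srcsubtreeresult} and these lower bounds sum to the total, each $T_j$ must saturate its bound, so $\bm{\nu}^{(n)}$ is minimally self-reachable on every $T_j$; this verifies property (1) of Lemma \ref{Near-Minimal Properties Lemma}. For property (2), since $\ell\ge n$ by hypothesis, the chip count on $v_t$ is $\ell+d+1-n\ge d+1$. Both properties hold, so Lemma \ref{Near-Minimal Properties Lemma} yields that $\bm{\nu}^{(n)}$ is near-minimally self-reachable about $v_t$.

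There is no real obstacle here; the lemma is essentially a restatement of Lemma \ref{Near-Minimal Properties Lemma} in terms of chip counts, and the only subtle point is to recognize that the minimal-subtree bound from Theorem \ref{srcsubtreeresult} becomes an equality on each $T_j$ simultaneously precisely when their combined slack is zero, which is exactly what a maximal count at $v_t$ forces.
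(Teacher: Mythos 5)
Your proof is correct and follows essentially the same route as the paper: count chips on the components of $T\setminus v_t$, use Theorem \ref{srcsubtreeresult} to bound each component from below, and combine with Lemma \ref{Near-Minimal Properties Lemma} to translate between the chip count at $v_t$ and the near-minimality conditions. The only cosmetic difference is that you invoke property (1) of Lemma \ref{Near-Minimal Properties Lemma} directly in the forward direction where the paper re-derives the same count from that lemma's proof, which if anything is slightly cleaner.
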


\begin{proof}
    Without loss of generality, we suppose that $t = n$. 
    We let $T_1, T_2, \ldots, T_d$ be the trees in $T \setminus v_n$, and we let $m_j$ be the number of chips on $T_j$ for each $j \in \mathbb{N}_{[1,m]}$. 
    Note that since $\bm{\nu}^{(n)}$ is self-reachable, $\bm{\nu}^{(n)}$ must have at least $m_j - 1$ chips on $T_j$ for each $j \in \mathbb{N}_{[1,m]}$. 
    To prove sufficiency, we assume that $\bm{\nu}^{(n)}$ is a near-minimally self-reachable configuration on $T$ about $v_n$ with $\ell$ chips. 
    In the proof of Lemma \ref{Near-Minimal Properties Lemma}, we saw that $\bm{\nu}^{(n)}$ must have exactly 
    \begin{align*}
        \sum_{j = 1}^d(m_j - 1) = n - 1 - d
    \end{align*}
    chips on the vertices excluding $v_n$. Therefore, there must be $\ell - (n - 1 - d) = \ell + d + 1 - n$ chips on $v_n$. 
    We cannot have fewer than $n - 1 - d$ chips on the trees in $T \setminus v_n$ or else we couldn't have $m_j - 1$ chips on $T_j$ for each $j \in \mathbb{N}_{[1,n]}$. 
    Therefore, $\bm{\nu}^{(n)}$ can have a maximum of $\ell + d + 1 - n$ chips on $v_n$. 
    
    To prove necessity, we assume that $\bm{\nu}^{(n)}$ has exactly $\ell + d + 1 - n$ chips on $v_n$. Then $\bm{\nu}^{(n)}$ has exactly
    \begin{align*}
        n - 1 - d = \sum_{j = 1}^d(m_j - 1)
    \end{align*}
    total chips on the trees $T_1, T_2, \ldots, T_d$. Because $\bm{\nu}^{(n)}$ is self-reachable on $T$, this means that $\bm{\nu}^{(n)}$ must have $m_j - 1$ chips on $T_j$ and form a minimally self-reachable configuration on $T_j$ for each $j \in \mathbb{N}_{[1,n]}$. So $\bm{\nu}^{(n)}$ forms a minimally self-reachable configuration on each tree in $T \setminus v_n$, and $\bm{\nu}^{(n)}$ has at least $d + 1$ chips on $v_n$ since $\ell \ge n$. So by Lemma \ref{Near-Minimal Properties Lemma}, $\bm{\nu}^{(n)}$ is a near-minimally self-reachable configuration on $T$ about $v_n$. This completes the proof.
\end{proof}

Given a near-minimally self-reachable configuration about a vertex $v_t$, one preserves this property when adjusting the configuration by adding chips to $v_t$.
This is verified via the following lemma.

\begin{lemma}\label{Add Chips to Near-Minimal Configuration}
    Let $T$ be an $n$-vertex tree. Suppose that $\bm{\nu}^{(n)}$ is a near-minimally self-reachable configuration on $T$ about $v_t$ with $\ell$ chips for some $t \in \mathbb{N}_{[1,n]}$ and $\ell \in \mathbb{N}_n$. Then $\bm{\nu}^{(n)} + L\bm{e}_t^{(n)}$ is a near-minimally self-reachable configuration on $T$ about $v_t$ with $\ell + L$ chips for any $L \in \mathbb{N}_0$.
\end{lemma}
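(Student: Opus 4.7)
The plan is to apply the characterization of near-minimally self-reachable configurations provided by Lemma \ref{Near-Minimal Properties Lemma}, which reduces the claim to checking two simple structural conditions on $\bm{\nu}^{(n)} + L\bm{e}_t^{(n)}$.

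First I would verify that $\bm{\nu}^{(n)} + L\bm{e}_t^{(n)}$ is self-reachable on $T$ and has strictly more than $n-1$ chips. Since $L \in \mathbb{N}_0$, this configuration has at least as many chips on each vertex as $\bm{\nu}^{(n)}$, so Proposition \ref{Add Chips to SRC} yields self-reachability immediately. Moreover, since $\ell \in \mathbb{N}_n$, the total chip count satisfies $\ell + L \ge n > n - 1$, so the new configuration is non-minimally self-reachable, which is prerequisite for being near-minimally self-reachable in the first place.

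Next I would verify conditions (1) and (2) of Lemma \ref{Near-Minimal Properties Lemma} for the vertex $v_t$. For condition (1), adding chips only to $v_t$ leaves the restriction of the configuration to each tree in $T \setminus v_t$ unchanged. Since $\bm{\nu}^{(n)}$ is near-minimally self-reachable about $v_t$ by hypothesis, Lemma \ref{Near-Minimal Properties Lemma} guarantees that its restriction to each such tree is already minimally self-reachable, and this restriction coincides with that of $\bm{\nu}^{(n)} + L\bm{e}_t^{(n)}$. For condition (2), $\bm{\nu}^{(n)}$ has at least $d + 1$ chips on $v_t$ where $d = \deg(v_t)$, so $\bm{\nu}^{(n)} + L\bm{e}_t^{(n)}$ has at least $d + 1 + L \ge d + 1$ chips on $v_t$. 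Both conditions hold, so the converse direction of Lemma \ref{Near-Minimal Properties Lemma} concludes that $\bm{\nu}^{(n)} + L\bm{e}_t^{(n)}$ is near-minimally self-reachable about $v_t$.

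The difficult work has already been carried out in proving the characterization Lemma \ref{Near-Minimal Properties Lemma}; given that structural result, the present lemma follows essentially by inspection, and there is no significant obstacle. The only minor subtlety worth being explicit about is separately confirming that $\bm{\nu}^{(n)} + L\bm{e}_t^{(n)}$ is non-minimally self-reachable, since the definition of near-minimality excludes minimally self-reachable configurations.
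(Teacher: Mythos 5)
Your proof is correct, but it routes through a different characterization than the paper does. You verify the two structural conditions of Lemma \ref{Near-Minimal Properties Lemma} directly: self-reachability of $\bm{\nu}^{(n)} + L\bm{e}_t^{(n)}$ via Proposition \ref{Add Chips to SRC}, condition (1) because the restriction to each tree of $T \setminus v_t$ is untouched, and condition (2) because the chip count on $v_t$ only increases. The paper instead invokes Lemma \ref{Max Chips Near-Minimal}: since $\bm{\nu}^{(n)}$ has exactly $\ell + d + 1 - n$ chips on $v_t$, the new configuration has exactly $(\ell + L) + d + 1 - n$ chips on $v_t$ out of $\ell + L$ total, which is the maximal-chip-count criterion for near-minimality about $v_t$. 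The two arguments are close cousins (Lemma \ref{Max Chips Near-Minimal} is itself deduced from Lemma \ref{Near-Minimal Properties Lemma}), so nothing substantive is gained or lost either way; the paper's version is a one-line count, while yours makes the invariance of the off-$v_t$ structure explicit. Your extra remark that the new configuration is non-minimal is careful but technically redundant, since conditions (1) and (2) already force at least $(n - 1 - d) + (d + 1) = n$ chips in total.
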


\begin{proof}
    Let $d = \deg(v_t)$. Note that $\bm{\nu}^{(n)} + L\bm{e}_t^{(n)}$ is self-reachable on $T$ because adding chips to the self-reachable configuration $\bm{\nu}^{(n)}$ must yield another self-reachable configuration on $T$. By Lemma \ref{Max Chips Near-Minimal}, $\bm{\nu}^{(n)}$ has exactly $\ell + d + 1 - n$ chips on $v_t$, meaning $\bm{\nu}^{(n)} + L\bm{e}_t^{(n)}$ has exactly $\ell + L + d + 1 - n$ chips on $v_t$. But since $\bm{\nu}^{(n)} + L\bm{e}_t^{(n)}$ has $\ell + L$ chips in total and is self-reachable on $T$, Lemma \ref{Max Chips Near-Minimal} implies that $\bm{\nu}^{(n)} + L\bm{e}_t^{(n)}$ is a near-minimally self-reachable configuration about $v_t$ with $\ell + L$ chips, as desired.
\end{proof}

The following lemma demonstrates how to preserve the property of near-minimally self-reachable when adding a new leaf vertex to an existing tree.

\begin{lemma}\label{Add Chips Near-Minimal Leaf}
   For $n\geq 2$, let $T$ be an $n$-vertex tree  where $v_n$ is a leaf connected to $v_{n - 1}$. Suppose that $\bm{\nu}^{(n - 1)}$ is a near-minimally self-reachable configuration on $T \setminus v_n$ about $v_t$ with $\ell$ chips for some $t \in \mathbb{N}_{[1,n-1]}$ and $\ell \in \mathbb{N}_n$. Then:

    \begin{enumerate}
        \item If $t \neq n - 1$, then $\left(\bm{\nu}^{(n - 1)} + \bm{e}_{n - 1}^{(n - 1)}, 0\right)$ and $\left(\bm{\nu}^{(n - 1)}, 1\right)$ are near-minimally self-reachable configurations on $T$ about $v_t$ with $\ell + 1$ chips.

        \item If $t = n - 1$, then for all $L \in \mathbb{N}_1$, $\left(\bm{\nu}^{(n - 1)} + L\bm{e}_{n - 1}^{(n - 1)}, 0\right)$ is a minimally self-reachable configuration on $T$ about $v_{n - 1}$ with $\ell + L$ chips.
    \end{enumerate}
\end{lemma}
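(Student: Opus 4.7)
The plan is to combine Corollary \ref{AddChipToLeafOrNeighbor}, which upgrades self-reachability from $T\setminus v_n$ to $T$, with the maximum-chips characterization of near-minimality from Lemma \ref{Max Chips Near-Minimal}. Near-minimality about $v_t$ reduces to $v_t$ carrying the largest chip count compatible with self-reachability, so the bulk of the proof is a careful tracking of degrees and chip counts. First I would set $d = \deg^{(T\setminus v_n)}(v_t)$ and invoke Lemma \ref{Max Chips Near-Minimal} on the hypothesis: since $\bm{\nu}^{(n-1)}$ is near-minimally self-reachable about $v_t$ on the $(n-1)$-vertex tree $T\setminus v_n$ with $\ell$ chips, we must have $\bm{e}_t^{(n-1)}\cdot \bm{\nu}^{(n-1)} = \ell + d + 2 - n$.

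For part (1), where $t\neq n-1$, the vertex $v_t$ is not adjacent to $v_n$, so $\deg^{(T)}(v_t) = d$. Corollary \ref{AddChipToLeafOrNeighbor} directly yields that both $\left(\bm{\nu}^{(n-1)} + \bm{e}_{n-1}^{(n-1)},0\right)$ and $\left(\bm{\nu}^{(n-1)},1\right)$ are self-reachable on $T$. Each has $\ell + 1$ total chips and preserves the chip count $\ell + d + 2 - n$ on $v_t$. A direct check shows this equals the max-chip formula $(\ell+1) + d + 1 - n$ from Lemma \ref{Max Chips Near-Minimal} applied to the $n$-vertex tree $T$, so the sufficiency direction of that lemma closes the case.

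For part (2), where $t = n-1$, the degree of $v_{n-1}$ jumps: $\deg^{(T)}(v_{n-1}) = d + 1$. Corollary \ref{AddChipToLeafOrNeighbor} again gives that $\left(\bm{\nu}^{(n-1)} + \bm{e}_{n-1}^{(n-1)},0\right)$ is self-reachable on $T$, and for general $L\geq 1$, Proposition \ref{Add Chips to SRC} extends self-reachability to $\left(\bm{\nu}^{(n-1)} + L\bm{e}_{n-1}^{(n-1)},0\right)$, since the only change from the $L=1$ case is additional chips placed on $v_{n-1}$. This configuration has $\ell + L$ total chips and carries exactly $\ell + L + d + 2 - n$ chips on $v_{n-1}$, which matches the max-chip formula $(\ell + L) + (d+1) + 1 - n$ of Lemma \ref{Max Chips Near-Minimal}. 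Applying the sufficiency direction of that lemma finishes the proof.

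The only real pitfall is the degree bookkeeping when passing from $T\setminus v_n$ to $T$: $v_{n-1}$ gains exactly one neighbor (namely $v_n$), while every other vertex retains its degree, and the shift in the formula $\ell + d + 1 - n$ must absorb both the change in degree and the change in vertex count simultaneously. After that is tracked carefully, everything reduces to a direct application of the two cited results.
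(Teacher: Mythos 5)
Your proof is correct and follows essentially the same route as the paper: both parts combine Corollary \ref{AddChipToLeafOrNeighbor} (plus Proposition \ref{Add Chips to SRC} for general $L$) with the maximum-chip characterization of Lemma \ref{Max Chips Near-Minimal}, tracking the degree shift of $v_{n-1}$ from $d$ to $d+1$ exactly as the paper does. No gaps.
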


\begin{proof}
    To see (1), we suppose $t \neq n - 1$, and we let $d = \deg^{(T \setminus v_n)}(v_t)$. Note that both $\left(\bm{\nu}^{(n - 1)} + \bm{e}_{n - 1}^{(n - 1)}, 0\right)$ and $\left(\bm{\nu}^{(n - 1)}, 1\right)$ are self-reachable on $T$ by Corollary \ref{AddChipToLeafOrNeighbor}, and both of these configurations have $\ell + 1$ chips on $T$. Since $\bm{\nu}^{(n - 1)}$ is a near-minimally self-reachable configuration on $T \setminus v_n$ about $v_t$ with $\ell$ chips, Lemma \ref{Max Chips Near-Minimal} implies that $\bm{\nu}^{(n - 1)}$ has $\ell + d + 1 - (n - 1) = (\ell + 1) + d + 1 - n$ chips on $v_t$. Therefore, both $\left(\bm{\nu}^{(n - 1)} + \bm{e}_{n - 1}^{(n - 1)}, 0\right)$ and $\left(\bm{\nu}^{(n - 1)}, 1\right)$ have $(\ell + 1) + d + 1 - n$ chips on $v_t$. Also note that $v_t$ still has degree $d$ in $T$ because $t \neq n - 1$. So Lemma \ref{Max Chips Near-Minimal} implies that both $\left(\bm{\nu}^{(n - 1)} + \bm{e}_{n - 1}^{(n - 1)}, 0\right)$ and $\left(\bm{\nu}^{(n - 1)}, 1\right)$ are near-minimally self-reachable on $T$ about $v_t$, as desired.

       To show (2), suppose $t = n - 1$. By the same argument as in the proof of the previous claim, $\bm{\nu}^{(n - 1)}$ has $\ell + d + 1 - (n - 1) = \ell + (d + 1) + 1 - n$ chips on $v_{n - 1}$. We know that $\left(\bm{\nu}^{(n - 1)} + \bm{e}_{n - 1}^{(n - 1)}, 0\right)$ is self-reachable on $T$ by Corollary \ref{AddChipToLeafOrNeighbor}. We can add an additional $L - 1$ chips to $v_{n - 1}$ starting from this configuration and maintain a self-reachable configuration on $T$ by Proposition \ref{Add Chips to SRC} since $L \in \mathbb{N}_1$. Thus, $\left(\bm{\nu}^{(n - 1)} + L\bm{e}_{n - 1}^{(n - 1)}, 0\right)$ is a self-reachable configuration on $T$ with $(\ell + L) + (d + 1) + 1 - n$ chips on $v_{n - 1}$ and $\ell + L$ chips in total. Since $T$ is an $n$-vertex tree and $v_{n - 1}$ has degree $d + 1$ on $T$, Lemma \ref{Max Chips Near-Minimal} implies that $\left(\bm{\nu}^{(n - 1)} + L\bm{e}_{n - 1}^{(n - 1)}, 0\right)$ is a near-minimally self-reachable configuration on $T$ about $v_{n - 1}$, as desired.
\end{proof}

In a similar vein to the previous result, the next lemma demonstrates one can obtain new near-minimally self-reachable configurations when adding a leaf to a tree. 
In this case, however, we demonstrate how to create such a configuration about the new leaf added. 

\begin{lemma}\label{Move Chips to Leaf}
    Let $T$ be an $n$-vertex tree with $n\geq 2$ where $v_n$ is a leaf connected to $v_{n - 1}$. Suppose that $\bm{\nu}^{(n - 1)}$ is a near-minimally self-reachable configuration on $T \setminus v_n$ about $v_t$ with $\ell$ chips for some $t \in \mathbb{N}_{[1,n-1]}$ and $\ell \in \mathbb{N}_n$. Let $\nu_t$ be the number of chips that $\bm{\nu}^{(n - 1)}$ has on $v_t$, and let $d = \deg^{(T \setminus v_n)}(v_t)$. Then $\left(\bm{\nu}^{(n - 1)} - (\nu_t - d)\bm{e}_t^{(n - 1)}, \nu_t - d + 1\right)$ is a near-minimally self-reachable configuration on $T$ about $v_n$ with $\ell + 1$ chips.
\end{lemma}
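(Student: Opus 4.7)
The plan is to invoke Lemma \ref{Max Chips Near-Minimal} as the final step, reducing the problem to verifying two things about the proposed configuration $\bm{\mu}^{(n)} \coloneq \left(\bm{\nu}^{(n-1)} - (\nu_t - d)\bm{e}_t^{(n - 1)}, \nu_t - d + 1\right)$: that it is self-reachable on $T$ with $\ell + 1$ chips, and that it has exactly the maximum permissible number of chips on $v_n$. Since $v_n$ is a leaf, $\deg^{(T)}(v_n) = 1$, so Lemma \ref{Max Chips Near-Minimal} demands $(\ell+1) + 1 + 1 - n = \ell + 3 - n$ chips on $v_n$. Meanwhile, Lemma \ref{Max Chips Near-Minimal} applied to $\bm{\nu}^{(n-1)}$ on $T\setminus v_n$ yields $\nu_t = \ell + d + 1 - (n-1) = \ell + d + 2 - n$, so $\bm{\mu}^{(n)}$ places $\nu_t - d + 1 = \ell + 3 - n$ chips on $v_n$, matching the requirement. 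The total chip count is also immediate: $\ell - (\nu_t - d) + (\nu_t - d + 1) = \ell + 1$. Note also $\nu_t - d \geq \ell + 2 - n \geq 2$, so the subtraction at $v_t$ is genuine.

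The substantive step is verifying self-reachability on $T$ via Theorem \ref{srcsubtreeresult}. I would fix an arbitrary $m$-vertex subtree $T^*$ of $T$ and split into four cases according to whether $T^*$ contains $v_t$ and/or $v_n$. The main obstacle is the case where $T^*$ contains $v_t$ but not $v_n$: here Lemma \ref{Near-Minimal Properties Lemma} tells us that $\bm{\nu}^{(n-1)}$ is minimally self-reachable on each of the $d$ components $T_1, \ldots, T_d$ of $(T \setminus v_n) \setminus v_t$. Writing $B_i \coloneq T^* \cap T_i$ and $k \coloneq \#\{i : B_i \neq \varnothing\}$, connectivity of $T^*$ forces each nonempty $B_i$ to be a subtree of $T_i$ adjacent to $v_t$. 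Corollary \ref{subtreeconfigsaresrcs} applied to the restriction of $\bm{\nu}^{(n-1)}$ to $T_i$ gives at least $|B_i| - 1$ chips on $B_i$, so the chip count on $T^*$ in $\bm{\mu}^{(n)}$ is at least $d + \sum_i (|B_i| - 1) = d + (m - 1) - k \geq m - 1$, since $k \leq d$.

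The remaining cases should be short. If $T^*$ avoids both $v_t$ and $v_n$, the subtree inequality for $\bm{\mu}^{(n)}$ on $T^*$ is identical to that for $\bm{\nu}^{(n-1)}$, which holds by self-reachability. If $T^*$ contains $v_n$ but not $v_t$, then either $T^* = \{v_n\}$ (trivial) or $T^* \setminus v_n$ is an $(m-1)$-vertex subtree of $T \setminus v_n$ on which $\bm{\nu}^{(n-1)}$ has at least $m - 2$ chips, and we gain at least one additional chip at $v_n$. Finally, if $T^*$ contains both $v_t$ and $v_n$, then $T^* \setminus v_n$ is an $(m - 1)$-vertex subtree of $T \setminus v_n$ containing $v_t$, so $\bm{\nu}^{(n-1)}$ places at least $m - 2$ chips on it; the adjustment $-(\nu_t - d) + (\nu_t - d + 1) = +1$ on $T^*$ then gives at least $m - 1$ chips for $\bm{\mu}^{(n)}$. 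Assembling all four cases, Theorem \ref{srcsubtreeresult} certifies $\bm{\mu}^{(n)} \in S_{\ell + 1}^{(T)}$, and Lemma \ref{Max Chips Near-Minimal} completes the proof.
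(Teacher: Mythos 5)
Your proposal is correct, but it takes a noticeably more hands-on route than the paper. The paper's proof never does a case analysis over subtrees of $T$: it observes, via Lemma \ref{Near-Minimal Properties Lemma}, that one can strip chips off $v_t$ one at a time down to $d$ while staying self-reachable on $T \setminus v_n$, notes that the resulting configuration then has exactly $n-2$ chips and is therefore \emph{minimally} self-reachable on $T \setminus v_n$, and then applies the sufficiency direction of Lemma \ref{Near-Minimal Properties Lemma} on $T$ (with $v_n$ a leaf of degree $1$ and $\nu_t - d + 1 \ge 2$ chips on it) to conclude near-minimality about $v_n$. You instead verify self-reachability of the new configuration from scratch with Theorem \ref{srcsubtreeresult} (your four cases, including the branch count $d + (m-1) - k \ge m-1$, essentially re-derive inequalities already packaged inside Lemma \ref{Near-Minimal Properties Lemma}) and then close with the max-chips characterization of Lemma \ref{Max Chips Near-Minimal} rather than the structural one. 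Both endgames are legitimate since the two lemmas are equivalent characterizations; the paper's version is shorter because it reuses the structural lemma twice, while yours is more self-contained and makes the key observation explicit that the truncated configuration is exactly minimal on $T \setminus v_n$ only implicitly (through the chip count $\nu_t - d = \ell + 2 - n$). Your arithmetic ($\nu_t = \ell + d + 2 - n$, hence $\nu_t - d + 1 = \ell + 3 - n$, the maximum for a leaf on an $n$-vertex tree with $\ell+1$ chips) checks out, and all four subtree cases are handled correctly.
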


\begin{proof}
    By Lemma \ref{Near-Minimal Properties Lemma}, we know that we can remove chips from $v_t$ on $T \setminus v_n$ starting from $\bm{\nu}^{(n - 1)}$ while maintaining a self-reachable configuration on $T \setminus v_n$ until we have $d$ chips left on $v_t$. Since we have $n - 2 - d$ chips on the trees in $(T \setminus v_n) \setminus v_t$, as seen in the proof of Lemma \ref{Near-Minimal Properties Lemma}, we will have $n - 2$ chips on $T \setminus v_n$ once we have $d$ chips left on $v_t$. This means that $\bm{\nu}^{(n - 1)} - (\nu_t - d)\bm{e}_t^{(n - 1)}$ is minimally self-reachable on $T \setminus v_{k + 1}$. Now note that since $\nu_t \ge d + 1$, $\nu_t - d + 1 \ge 2 = \deg^{(T)}(v_n) + 1$. So by Lemma \ref{Near-Minimal Properties Lemma}, $\left(\bm{\nu}^{(n - 1)} - (\nu_t - d)\bm{e}_t^{(n - 1)}, \nu_t - d + 1\right)$ is a near-minimally self-reachable configuration on $T$ about $v_n$. This configuration also has $\ell + 1$ chips because of the additional chip added to $v_n$. So the proof is complete. 
\end{proof}

The lemma to follow verifies that all near-minimally self reachable configurations must be vertices of $\CP{\ell}{T}$.

\begin{lemma}\label{Near-Minimal Configurations are Vertices}
    Let $T$ be an $n$-vertex tree, and let $\ell \in \mathbb{N}_n$. Every near-minimally self-reachable configuration on $T$ about $v_i$ with $\ell$ chips is a vertex of $\CP{\ell}{T}$.
\end{lemma}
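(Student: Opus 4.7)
The plan is to show $\bm{\nu}^{(n)}$ is an extreme point of $\CP{\ell}{T}$: suppose $\bm{\nu}^{(n)} = \lambda\bm{a}^{(n)} + (1-\lambda)\bm{b}^{(n)}$ with $\bm{a}^{(n)}, \bm{b}^{(n)} \in \CP{\ell}{T}$ and $0 < \lambda < 1$, and I will deduce that $\bm{a}^{(n)} = \bm{b}^{(n)} = \bm{\nu}^{(n)}$.

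The first step is to use Lemma \ref{Max Chips Near-Minimal} to identify $\bm{\nu}^{(n)}$ as a maximizer of the linear functional $f(\bm{x}) = \bm{e}_i^{(n)} \cdot \bm{x}$ over $\CP{\ell}{T}$, with maximum value $\ell + d + 1 - n$ where $d = \deg(v_i)$. Since $f$ is linear, both $\bm{a}^{(n)}$ and $\bm{b}^{(n)}$ must also attain this maximum. Writing $\bm{a}^{(n)} = \sum_k \mu_k \bm{x}_k^{(n)}$ as a convex combination of elements of $S_\ell^{(T)}$, the equality $f(\bm{a}^{(n)}) = \ell + d + 1 - n$ combined with $f(\bm{x}_k^{(n)}) \le \ell + d + 1 - n$ forces equality for every $k$ with $\mu_k > 0$. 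By Lemma \ref{Max Chips Near-Minimal}, each such $\bm{x}_k^{(n)}$ is then near-minimally self-reachable about $v_i$, and the same conclusion holds for $\bm{b}^{(n)}$.

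The second step exploits the cube structure on each of the trees $T_1, \ldots, T_d$ comprising $T \setminus v_i$. By Lemma \ref{Near-Minimal Properties Lemma}, every near-minimally self-reachable configuration about $v_i$ restricts to a minimally self-reachable configuration on each $T_j$, with exactly $m_j - 1$ chips, where $m_j = |V(T_j)|$. Hence the restrictions $\bm{a}^{(n)}|_{T_j}$ and $\bm{b}^{(n)}|_{T_j}$ both lie in $\CP{m_j - 1}{T_j}$. Projecting the equation $\bm{\nu}^{(n)} = \lambda\bm{a}^{(n)} + (1-\lambda)\bm{b}^{(n)}$ onto the coordinates of $T_j$ writes the lattice point $\bm{\nu}^{(n)}|_{T_j}$ as a convex combination inside $\CP{m_j - 1}{T_j}$. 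But by Theorem \ref{cubeunimodularequivalence}, $\CP{m_j - 1}{T_j}$ is unimodularly equivalent to a unit cube, so every one of its lattice points is a vertex and hence an extreme point. Consequently $\bm{a}^{(n)}|_{T_j} = \bm{b}^{(n)}|_{T_j} = \bm{\nu}^{(n)}|_{T_j}$ for each $j$. Since the three configurations then agree on every vertex other than $v_i$ and all have the same chip total $\ell$, they must also agree on $v_i$, completing the argument.

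The main subtlety is the clean reduction to the product of cubes, namely verifying that the face of $\CP{\ell}{T}$ on which $f$ is maximized consists precisely of convex combinations of near-minimally self-reachable configurations about $v_i$, so that the restriction map to each $T_j$ genuinely lands in $\CP{m_j-1}{T_j}$. This is essentially bookkeeping once the maximizer characterization from Lemma \ref{Max Chips Near-Minimal} is in place, and reducing the remaining analysis to extremality in a unit cube is then immediate from Theorem \ref{cubeunimodularequivalence}.
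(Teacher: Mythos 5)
Your proof is correct and follows essentially the same route as the paper's: both arguments first use Lemma \ref{Max Chips Near-Minimal} to force every configuration appearing in the convex combination to have the maximal chip count $\ell+d+1-n$ on $v_i$ (hence to be near-minimal about $v_i$), and then invoke the $0/1$-cube structure from Theorem \ref{cubeunimodularequivalence} on the components of $T\setminus v_i$ to conclude all the pieces coincide. The only difference is presentational -- you phrase it via two-point extremality and projection onto the components, while the paper works directly with a convex combination of lattice points -- and your final step recovering agreement at $v_i$ from the common chip total is a detail the paper leaves implicit.
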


\begin{proof}
    It suffices to prove our claim for a near-minimally self-reachable configuration about $v_n$ because the vertex labeling of $T$ is arbitrary. 
    Let $\left(\bm{\nu}^{(n - 1)}, \nu_n\right)$ be near-minimal about $v_n$, and let $d = \deg(v_n)$. 
    It suffices to show that $\left(\bm{\nu}^{(n - 1)}, \nu_n\right)$ cannot be written as a nontrivial convex combination of self-reachable configurations on $T$ with $\ell$ chips.
    We suppose that
    \begin{align}\label{eq8.1}
        \left(\bm{\nu}^{(n - 1)}, \nu_n\right) = \sum_{j = 1}^{k}\lambda_j\left(\bm{s}_j^{(n - 1)}, s_{n, j}\right),
    \end{align}
    where 
    \begin{align*}
        \sum_{j = 1}^{k}\lambda_j = 1,
    \end{align*}
    $\lambda_j \ge 0$, and $\left(\bm{s}_j^{(n - 1)}, s_{n, j}\right)$ is a self-reachable configuration on $T$ with $\ell$ chips for all $j \in \mathbb{N}_{[1,k]}$. As shown in Lemma \ref{Max Chips Near-Minimal}, we must have $\nu_n = \ell + d + 1 - n$ and $s_{n, j} \le \ell + d + 1 - n$. But since we must have
    \begin{align*}
        \sum_{j = 1}^{k}\lambda_js_{n, j} = \ell + d + 1 - n,
    \end{align*}
    we must have $s_{n, j} = \ell + d + 1 - n$ for all $j \in \mathbb{N}_{[1,k]}$. Then by the same argument as in the proof of Lemma \ref{Max Chips Near-Minimal}, $\left(\bm{s}_j^{(n - 1)}, s_{n, j}\right)$ must have minimally self-reachable configurations on each of the trees in $T \setminus v_n$ for all $j \in \mathbb{N}_{[1,k]}$. Thus, $\bm{s}_j^{(n - 1)}$ is a concatenation of minimally self-reachable configurations on the trees in $T \setminus v_n$ for all $j \in \mathbb{N}_{[1,k]}$. But Theorem \ref{cubeunimodularequivalence} implies that minimally self-reachable configurations on any tree cannot be nontrivial convex combinations of each other. Therefore, we must have $\bm{s}_j^{(n - 1)}$ be the same for all $j \in \mathbb{N}_{[1,k]}$ to avoid contradicting this fact, which trivializes the convex combination in \eqref{eq8.1} and completes the proof.
\end{proof}

The following lemma shows that any self-reachable configuration can be expressed as a convex combination of near-minimally self-reachable configurations, which is a very important result for showing Theorem \ref{thm:vertexStructure}.

\begin{lemma}\label{SRC Polytope Vertex Lemma}
    Let $T$ be an $n$-vertex tree, and let $\bm{s}^{(n)}$ be a self-reachable configuration on $T$ with $\ell$ chips for some $\ell \in \mathbb{N}_n$. Then $\bm{s}^{(n)}$ can be expressed as a convex combination of near-minimally self-reachable configurations on $T$ with $\ell$ chips.
\end{lemma}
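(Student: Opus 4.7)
The strategy is a short midpoint argument: I will show that every self-reachable configuration with $\ell\ge n$ chips that is \emph{not} near-minimally self-reachable is the average of two other self-reachable configurations on $T$ with $\ell$ chips. Combined with Lemma~\ref{Near-Minimal Configurations are Vertices}, this identifies the vertex set of $\CP{\ell}{T}$ with precisely the near-minimally self-reachable configurations having $\ell$ chips, and the conclusion then follows from the standard polytope-theoretic fact that any point of a polytope lies in the convex hull of its vertex set.

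The core construction is as follows. Let $\bm{s}^{(n)}\in S_\ell^{(T)}$ fail to be near-minimally self-reachable. Since $\ell\ge n>n-1$, the configuration $\bm{s}^{(n)}$ is non-minimally self-reachable, so the $t=1$ case of Lemma~\ref{subtractoffchips} guarantees the existence of some $i\in\mathbb{N}_{[1,n]}$ with $\bm{s}^{(n)}-\bm{e}_i^{(n)}$ self-reachable on $T$. Because $\bm{s}^{(n)}$ is not near-minimal, such an index cannot be unique, so there exists $j\ne i$ with $\bm{s}^{(n)}-\bm{e}_j^{(n)}$ also self-reachable on $T$. I would then set
\[
\bm{s}_1^{(n)}=\bm{s}^{(n)}+\bm{e}_i^{(n)}-\bm{e}_j^{(n)},\qquad \bm{s}_2^{(n)}=\bm{s}^{(n)}-\bm{e}_i^{(n)}+\bm{e}_j^{(n)},
\]
and verify that each is a nonnegative integer vector with $\ell$ chips (nonnegativity uses that the $i$th and $j$th components of $\bm{s}^{(n)}$ are at least $1$, which follows from self-reachability of $\bm{s}^{(n)}-\bm{e}_i^{(n)}$ and $\bm{s}^{(n)}-\bm{e}_j^{(n)}$) and is self-reachable on $T$ by Proposition~\ref{Add Chips to SRC}, since each is obtained from a self-reachable configuration by adding a single chip. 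Because $i\ne j$, the two configurations are distinct, and $\bm{s}^{(n)}=\tfrac12\bm{s}_1^{(n)}+\tfrac12\bm{s}_2^{(n)}$ exhibits $\bm{s}^{(n)}$ as the midpoint of two distinct points of $\CP{\ell}{T}$, so $\bm{s}^{(n)}$ is not a vertex of $\CP{\ell}{T}$.

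Combining the preceding observation with Lemma~\ref{Near-Minimal Configurations are Vertices}, the vertex set of $\CP{\ell}{T}$ coincides exactly with the set of near-minimally self-reachable configurations on $T$ with $\ell$ chips. Since $\bm{s}^{(n)}\in\CP{\ell}{T}$, writing it as a convex combination of these vertices completes the argument. The only mild obstacle here is checking that $\bm{s}_1^{(n)}$ and $\bm{s}_2^{(n)}$ remain self-reachable after the simultaneous addition and subtraction of chips, which is handled cleanly by Proposition~\ref{Add Chips to SRC}; the remaining steps are bookkeeping.
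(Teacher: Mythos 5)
Your argument is correct, and it is genuinely different from the paper's proof. The paper proceeds by induction on $n$: it strips off a leaf $v_{k+1}$, splits into cases according to the number of chips $s_{k+1}$ on that leaf, and builds explicit convex combinations using the structural Lemmas~\ref{Add Chips Near-Minimal Leaf}, \ref{Move Chips to Leaf}, and \ref{Add Chips to Near-Minimal Configuration} (the displayed identities \eqref{eq8.3} and \eqref{eq8.4}). You instead give a direct extremality argument: for $\ell\ge n$ the configuration is non-minimal, so Lemma~\ref{subtractoffchips} (with $t=1$) produces at least one removable chip, failure of near-minimality produces a second index $j\ne i$, and the perturbations $\bm{s}^{(n)}\pm(\bm{e}_i^{(n)}-\bm{e}_j^{(n)})$ stay in $S_\ell^{(T)}$ by Proposition~\ref{Add Chips to SRC}, exhibiting $\bm{s}^{(n)}$ as a midpoint of two distinct points of $\CP{\ell}{T}$. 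Combined with the standard fact that a polytope is the convex hull of its vertices (and that the vertex set of $\conv(S_\ell^{(T)})$ lies inside $S_\ell^{(T)}$), this settles both the lemma and, together with Lemma~\ref{Near-Minimal Configurations are Vertices}, Theorem~\ref{thm:vertexStructure} in one stroke. Your route is shorter and bypasses most of the section's technical machinery; what it gives up is constructivity --- the paper's induction produces an explicit decomposition into near-minimal configurations (as illustrated in the closing example), whereas yours only certifies existence via Minkowski's theorem. One small point worth making explicit if you write this up: the uniqueness clause in the definition of near-minimality fails either because no removable index exists or because two do, and it is Lemma~\ref{subtractoffchips} that rules out the first possibility; you do say this, but it is the hinge of the whole argument.
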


\begin{proof}
    We proceed by induction on $n$. Our claim holds for $n = 1$ because all self-reachable configurations on a 1-vertex tree are near-minimally self-reachable. 
    For our induction hypothesis, we assume that any self-reachable configuration on a $k$-vertex tree $T$ with $\ell$ chips can be expressed as a convex combination of near-minimally self-reachable configurations on $T$ with $\ell$ chips for all $\ell \in \mathbb{N}_k$. 
    We now let $T$ be a $(k + 1)$-vertex tree and, without loss of generality, assume $v_{k + 1}$ is a leaf connected to $v_k$, and we let $\left(\bm{s}^{(k)}, s_{k + 1}\right)$ be a self-reachable configuration on $T$ with $\ell$ chips for $\ell \in \mathbb{N}_{k + 1}$. We consider the following cases:

 \textbf{Case 1:} $s_{k + 1} = 0$. Then by Corollary \ref{AddChipToLeafOrNeighbor}, we have that $\bm{s}^{(k)} - \bm{e}_k^{(k)}$ is a self-reachable configuration on $T \setminus v_{k + 1}$ with $\ell - 1$ chips. Since $\ell - 1 \in \mathbb{N}_k$, this guarantees that
        \begin{align*}
            \bm{s}^{(k)} - \bm{e}_k^{(k)} = \sum_{j = 1}^{q}\lambda_j\bm{\nu}_j^{(k)},
        \end{align*}
        where $\bm{\nu}_j^{(k)}$ is a near-minimally self-reachable configuration on $T \setminus v_{k + 1}$ with $\ell - 1$ chips for each $j \in \mathbb{N}_{[1,q]}$, and the $\lambda_j$s are nonnegative and sum to 1. So by Lemma \ref{Add Chips Near-Minimal Leaf}, $\left(\bm{\nu}_j^{(k)} + \bm{e}_k^{(k)}, 0\right)$ is near-minimally self-reachable on $T$ with $\ell$ chips for each $j \in \mathbb{N}_{[1,q]}$. Furthermore,
        \begin{align*}
            \left(\bm{s}^{(k)}, s_{k + 1}\right) = \sum_{j = 1}^{q}\lambda_j\left(\bm{\nu}_j^{(k)} + \bm{e}_k^{(k)}, 0\right),
        \end{align*}
        which proves the desired claim in this case.

\textbf{Case 2:} $s_{k + 1} = \ell + 1 - k$. By Lemma \ref{Max Chips Near-Minimal}, $\left(\bm{s}^{(k)}, s_{k + 1}\right)$ is near-minimally self-reachable about $v_{k + 1}$, which proves the desired claim in this case.

\textbf{Case 3:} $1 \le s_{k + 1} \le \ell - k$. Since $s_{k + 1} \le \ell - k$, $\bm{s}^{(k)}$ is guaranteed to have at least $k$ chips on $T \setminus v_{k + 1}$. Also, $\bm{s}^{(k)}$ is self-reachable on $T \setminus v_{k + 1}$ by Corollary \ref{subtreeconfigsaresrcs}. Therefore, we can apply our induction hypothesis to $\bm{s}^{(k)}$ and deduce that
        \begin{align*}
            \bm{s}^{(k)} = \sum_{j = 1}^{q}\lambda_j\bm{\nu}_j^{(k)},
        \end{align*}
        where $\bm{\nu}_j^{(k)}$ is a near-minimally self-reachable configuration on $T \setminus v_{k + 1}$ with $\ell - s_{k + 1}$ chips for each $j \in \mathbb{N}_{[1,q]}$. We then have that
        \begin{align}\label{eq8.2}
            \left(\bm{s}^{(k)}, s_{k + 1}\right) = \sum_{j = 1}^{q}\lambda_j\left(\bm{\nu}_j^{(k)}, s_{k + 1}\right).
        \end{align}
        The desired claim will follow if we can write each summand in \eqref{eq8.2} as a convex combination of near-minimally self-reachable configurations on $T$ with $\ell$ chips. 
        First note that if $\bm{\nu}_j^{(k)}$ is near-minimally self-reachable on $T \setminus v_{k + 1}$ about $v_t$ for $t \in \mathbb{N}_{[1,k-1]}$, then $\left(\bm{\nu}_j^{(k)}, 1\right)$ is near-minimally self-reachable on $T$ about $v_t$ by Lemma \ref{Add Chips Near-Minimal Leaf}. 
        Thus, $\left(\bm{\nu}_j^{(k)} + (s_{k + 1} - 1)\bm{e}_t^{(k)}, 1\right)$ is near-minimally self-reachable on $T$ about $v_t$ by Lemma \ref{Add Chips to Near-Minimal Configuration}, and this configuration has $\ell$ chips on $T$. 
        We also know by Lemma \ref{Move Chips to Leaf} that if $\bm{\nu}_j^{(k)}$ has $\nu_{t, j}$ chips on $v_t$, then $\left(\bm{\nu}_j^{(k)} - \left(\nu_{t, j} - d\right)\bm{e}_t^{(k)}, \nu_{t, j} - d + 1\right)$ is near-minimally self-reachable on $T$ about $v_{k + 1}$, where $d = \deg^{(T \setminus v_{k + 1})}(v_t)$ We can then add $s_{k + 1} - 1$ chips to $v_{k + 1}$ by Lemma \ref{Add Chips to Near-Minimal Configuration}, and we get that $\left(\bm{\nu}_j^{(k)} - \left(\nu_{t, j} - d\right)\bm{e}_t^{(k)}, \nu_{t, j} - d + s_{k + 1}\right)$ is a near-minimally self-reachable configuration on $T$ about $v_{k + 1}$ with $\ell$ chips. Now observe that
        \begin{equation}\label{eq8.3}
            \begin{split}
                \left(\bm{\nu}_j^{(k)}, s_{k + 1}\right) &= \frac{\nu_{t, j} - d}{s_{k + 1} - 1 + \nu_{t, j} - d}\left(\bm{\nu}_j^{(k)} + (s_{k + 1} - 1)\bm{e}_t^{(k)}, 1\right) \\
                &+ \frac{s_{k + 1} - 1}{s_{k + 1} - 1 + \nu_{t, j} - d}\left(\bm{\nu}_j^{(k)} - \left(\nu_{t, j} - d\right)\bm{e}_t^{(k)}, \nu_{t, j} - d + s_{k + 1}\right).
            \end{split}
        \end{equation}
        This is a valid convex combination because $\nu_{t, j} \ge d + 1$ by Lemma \ref{Near-Minimal Properties Lemma} and $s_{k + 1} \ge 1$ by assumption. Now suppose $\bm{\nu}_j^{(k)}$ is near-minimally self-reachable about $v_k$. We let $\nu_{k, j}$ be the number of chips $\bm{\nu}_j^{(k)}$ has on $v_k$, and we let $d = \deg^{(T \setminus v_{k + 1})}(v_k)$. By the same reasoning as before, $\left(\bm{\nu}_j^{(k)} - \left(\nu_{k, j} - d\right)\bm{e}_k^{(k)}, \nu_{k, j} - d + s_{k + 1}\right)$ will be near-minimally self-reachable on $T$ about $v_{k + 1}$ and will have $\ell$ chips. Also, by Lemma \ref{Add Chips Near-Minimal Leaf}, $\left(\bm{\nu}_j^{(k)} + s_{k + 1}\bm{e}_k^{(k)}, 0\right)$ is near-minimally self-reachable on $T$ about $v_{k}$ and also has $\ell$ chips. Now observe that
        \begin{equation}\label{eq8.4}
            \begin{split}
                \left(\bm{\nu}_j^{(k)}, s_{k + 1}\right) &= \frac{\nu_{k, j} - d}{s_{k + 1} + \nu_{k, j} - d}\left(\bm{\nu}_j^{(k)} + s_{k + 1}\bm{e}_k^{(k)}, 0\right) \\
                &+ \frac{s_{k + 1}}{s_{k + 1} + \nu_{k, j} - d}\left(\bm{\nu}_j^{(k)} - \left(\nu_{k, j} - d\right)\bm{e}_k^{(k)}, \nu_{k, j} - d + s_{k + 1}\right).
            \end{split}
        \end{equation}
        Once again, this is a valid convex combination because $\nu_{t, j} \ge d + 1$ by Lemma \ref{Near-Minimal Properties Lemma} and $s_{k + 1} \ge 1$ by assumption. Taken together, \eqref{eq8.3} and \eqref{eq8.4} imply that every summand in \eqref{eq8.2} can be written as a convex combination of near-minimally self-reachable configurations on $T$ with $\ell$ chips. When we substitute these convex combinations into \eqref{eq8.2}, we obtain a convex combination of near-minimally self-reachable configurations on $T$ with $\ell$ chips that yields $\left(\bm{s}^{(k)}, s_{k + 1}\right)$.

    Since $\left(\bm{s}^{(k)}, s_{k + 1}\right)$ was able to be expressed as a convex combination of near-minimally self-reachable configurations on $T$ with $\ell$ chips in all cases, the proof is complete.
\end{proof}

We now have the necessary machinery to prove Theorem \ref{thm:vertexStructure}.

\begin{proof}[Proof of Theorem \ref{thm:vertexStructure}]
    By Lemma \ref{Near-Minimal Configurations are Vertices}, we have that near-minimally self-reachable configurations on $T$ with $\ell$ chips are vertices of $\CP{\ell}{T}$, so we only need to show that no other vertices exist. It suffices to show that any point in $\CP{\ell}{T}$ can be expressed as a convex combination of near-minimally self-reachable configurations on $T$ with $\ell$ chips. We let $\bm{x}^{(n)} \in \CP{\ell}{T}$ be arbitrary. Then
    \begin{align}\label{eq8.5}
        \bm{x}^{(n)} = \sum_{j = 1}^{k}\lambda_j\bm{s}_j^{(n)},
    \end{align}
    where $\bm{s}_j^{(n)}$ is a self-reachable configuration on $T$ with $\ell$ chips for each $j \in \mathbb{N}_{[1,k]}$ and the $\lambda_j$s are nonnegative and sum to 1. By Lemma \ref{SRC Polytope Vertex Lemma}, we can express each $\bm{s}_j^{(n)}$ as a convex combination of the near-minimally self-reachable configurations on $T$ with $\ell$ chips. We can then substitute these convex combinations into \eqref{eq8.5} to express $\bm{x}^{(n)}$ as a convex combination of near-minimally self-reachable configurations on $T$ with $\ell$ chips. 
\end{proof}

We conclude this section with an example that demonstrates some of the methods used in the proof of Lemma \ref{SRC Polytope Vertex Lemma}.

\begin{example}
    Consider the self-reachable configuration $(1, 2, 3)$ on the path graph $P_3$ (in which we label the vertices such that $v_1$ and $v_3$ are the leaves). Note that on the graph $P_2$, we have that $(3, 0)$ and $(0, 3)$ are near-minimally self-reachable, and
    \begin{align*}
        (1,2)
        = \frac{1}{3}
       (3,0)
        + \frac{2}{3}
       (0,3)
    \end{align*}
    Therefore, we have that
    \begin{align}\label{eq4.6}
        (1,2,3)
        = \frac{1}{3}
        (3,0,3)
        + \frac{2}{3}
       (0,3,3)
    \end{align}
    We now need to write $(3, 0, 3)$ and $(0, 3, 3)$ as convex combinations of near-minimally self-reachable configurations on $P_3$. Since $(3, 0)$ is near-minimally self-reachable on $P_2$ about $v_1$, we have that $(3, 0, 1)$ is near-minimally self-reachable on $P_3$ about $v_1$ by Lemma \ref{Add Chips Near-Minimal Leaf}. Therefore, $(5, 0, 1)$ is also near-minimally self-reachable on $P_3$ about $v_1$ by Lemma \ref{Add Chips to Near-Minimal Configuration}. By Lemma \ref{Move Chips to Leaf}, we also have that $(1, 0, 5)$ is near-minimally self-reachable on $P_3$ about $v_3$. The convex combination generated by \eqref{eq8.3} is
    \begin{align}\label{eq4.7}
        (3,0,3)
        =
        \frac{1}{2}
        (5,0,1)
        + \frac{1}{2}
        (1,0,5)
    \end{align}
    Now note that since $(0, 3)$ is near-minimally self-reachable on $P_2$ about $v_2$, we have that $(0, 4, 0)$ is near-minimally self-reachable on $P_3$ about $v_2$ by Lemma \ref{Add Chips Near-Minimal Leaf}. So by Lemma \ref{Add Chips to Near-Minimal Configuration}, $(0, 6, 0)$ is also near-minimally self-reachable on $P_3$ about $v_2$. By Lemma \ref{Move Chips to Leaf}, we also have that $(0, 1, 5)$ is near-minimally self-reachable on $P_3$ about $v_3$. The convex combination generated by \eqref{eq8.4} is
    \begin{align}\label{eq4.8}
       (0,3,3)
        =
        \frac{2}{5}
        (0,6,0)
        + \frac{3}{5}
        (0,1,5)
    \end{align}
    When we substitute \eqref{eq4.7} and \eqref{eq4.8} into \eqref{eq4.6}, we obtain
    \begin{align}\label{eq4.9}
        (1,2,3)
        = \frac{1}{6}
       (5,0,1)
        + \frac{1}{6}
        (1,0,5)
        + \frac{4}{15}
        (0,6,0)
        + \frac{2}{5}
        (0,1,5)
    \end{align}
Since \eqref{eq4.9} expresses $(1, 2, 3)$ as a convex combination of near-minimally self-reachable configurations on $P_3$ with 6 chips, the example is complete.
\end{example}

\end{document}